\newcommand{\mP}{{\mathbb P}}
\newtheorem{theorem}{Theorem}
\newtheorem{lemma}{Lemma}
\newtheorem{rem}{Remark}
\newtheorem{prop}{Proposition}
\newtheorem{example}{Example}
\newtheorem{definition}{Definition}
\newtheorem{corollary}{Corollary}
\title[Stochastic precedence and minima among dependent variables]{Stochastic precedence and minima among dependent variables}
\author{Emilio De Santis}
\address{University of Rome La Sapienza, Department of Mathematics
Piazzale Aldo Moro, 5, I-00185, Rome, Italy}
\email{desantis@mat.uniroma1.it}
\author{Yaakov Malinovsky}
\address{Department of Mathematics and Statistics
University of Maryland, Baltimore County
1000 Hilltop Circle
Baltimore, MD 21250}
\email{yaakovm@umbc.edu}
\author{Fabio Spizzichino}
\address{University of Rome La Sapienza
Piazzale Aldo Moro, 5, I-00185, Rome, Italy}
\email{fabio.spizzichino@fondazione.uniroma1.it}
\begin{document}

\begin{abstract}
The notion of stochastic precedence between two random variables emerges as a relevant concept in several fields of applied probability.
When one consider a vector of  random variables $X_1,...,X_n$, this notion has a preeminent role in the analysis of minima of the type $\min_{j \in A} X_j$
for $A \subset \{1, \ldots n\}$.
In such an analysis, however, several apparently controversial aspects can arise (among which phenomena of ``non-transitivity").
        Here we concentrate attention on vectors of non-negative random variables with absolutely continuous  joint distributions, in which a case the set of the multivariate conditional hazard rate (m.c.h.r.) functions  can be employed as a convenient method to describe different aspects of stochastic dependence.
 In terms of the m.c.h.r. functions, we first obtain convenient formulas for the probability distributions of the variables $\min_{j \in A} X_j$ and for the probability of events $\{X_i=\min_{j \in A} X_j\}$. Then we detail several aspects of the notion of stochastic precedence.
On these bases, we explain some controversial behavior of such variables  and give sufficient conditions under which paradoxical aspects can be excluded.
On the purpose of stimulating active interest of readers, we present several comments and pertinent examples.

\medskip
\noindent
\emph{Keywords:} Multivariate Conditional Hazard Rates, Non-transitivity, aggregation/marginalization paradoxes,   ``small" variables, initially time--homogeneous models, time--homogeneous load sharing models.

\medskip \noindent
\emph{AMS MSC 2010:} 60K10, 60E15, 91B06.
\end{abstract}

\maketitle

\section{Introduction} \label{intro}

Let us
consider a vector of non-negative random variables $\mathbf{X} =(X_{1},...,X_{n})$,
defined on a probability space $\left( \Omega ,\mathcal{F},\mathbb{P}%
\right) $. We write $[n]$ for the set $\{1,2, \ldots , n  \}$. For $i\neq j\in \left[ n\right] $, one says that $X_{i}$ \emph{is
smaller than} $X_{j}$ \emph{in the stochastic precedence} whenever the
inequality
\[
\mathbb{P}\left( X_{i} \leq X_{j}\right) \geq \mathbb{P}\left( X_{j} \leq  X_{i}\right)
\]
holds true; this condition will be denoted by $X_{i}\preceq _{sp}X_{j}$.

This notion of comparison is clearly very natural and it is of actual
interest for some applications. In fact, it had been considered several
times in the literature, possibly under a variety of different terms. In the
last few years, in particular, this property has been attracting more and
more interest in different applied contexts; see e.g. references  \cite{AS2000, BSC2004, DSS1, FHH18, NR2010}.

Several controversial or apparently counter-intuitive aspects have been
however pointed out, since a long time. In particular one can meet aspects
of \emph{non-transitivity} and other related phenomena which we will refer to as \emph{
aggregation/marginalization paradoxes}. See in particular \cite{Blyth1972,
DSS1} and the references cited therein.  More generally, it there exists a very wide literature concerning
with controversial and counter-intuitive aspects related with
non-transitivity, in mathematics and probability (see e.g. \cite{G,Savage,
steinhausparadox, TRYBULA}). In the fields of economics,
statistics, social choices, as it is well-known, the interest
toward these topics is enormous and  the
literature considering such subjects has a very long tradition, see, in particular, \cite{BF83, Fishburn,
Saari} and references cited therein. In our analysis, it is
important to be aware of the relations and similarities among all such
contexts.

The aspects
concerning with aggregation/marginalization paradoxes can be seen as related to the
literature on the theme of Simpson's paradoxes (see e.g. \cite{ Blyth2, ScaSpi,Simpson}).
Specifically concerning the topic of stochastic precedence, several examples
and counter-examples about controversial aspects can be found in the
analysis of occurrence times for ``words" in random sampling of letters from
an alphabet (see e.g. \cite{ChZa79, DSS2,G, Li80}). This field is also related to the analysis
 of stochastic comparisons for hitting times for Markov chains see e.g. \cite{DSS4, DSS5} and references therein.

Going to the specific purposes of this paper, we notice that it can be
useful to understand situations where the paradoxical phenomena of
stochastic precedence are to be expected or, on the contrary, where they can
be excluded. We  point out that many of such phenomena emerge in the
case of stochastic dependence among the random variables under consideration.
 It is relevant, in this respect, to pay attention to the way in which
stochastic dependence is described. 
Here we limit our attention to the cases,
when the joint probability distribution of $\left( X_{1},...,X_{n}\right) $
is absolutely continuous and can thus be described in terms of the joint
probability density.

More in particular we consider non-negative random variables, in which case
a possible tool  for describing the joint
probability law and the type of stochastic dependence
can be based  on the family of  \emph{multivariate conditional
hazard rate}  functions. See e.g. \cite{ShaSha90} and also
 the reviews within the more recent papers \cite{ShaSha15,Spiz15, Spiz18}. This tool is different, but
equivalent to the one based on the joint density function. In fact, there
are well-known formulas that, at least in principle, allow one to derive the
m.c.h.r. functions from the knowledge of the joint density and viceversa.
But the two types of descriptions completely differ in their abilities to
highlight different aspects of stochastic dependence. Here, we aim to point
out that, for non-negative variables, the description based upon the m.c.h.r. functions can reveal a
useful one to understand some aspects of stochastic precedence and related
issues.

The structure of the paper is described as follows.

In the next Section 2 we give some basic notation and definitions, and
preliminary results concerning the minimum among several non-negative random
variables in the jointly absolutely continuous case. In particular we
recall basic definitions and facts about the system of the m.c.h.r. functions. Section 3 will be devoted to the notion
of stochastic precedence and related controversial aspects. 
In Section 4 we analyze some
different conditions on the variables $X_{1},...,X_{n}$, that exclude the
occurrence of some of such paradoxical situations.

\section{Notation, basic definitions and preliminary results} \label{sec2}

In this section, we give basic definitions and we show some preliminary results about  the minimum among random variables. In particular we analyze the role of multivariate
conditional hazard rates.
For a given non-negative, absolutely continuous, random variable $X$, we denote by $r(t)$ the ordinary hazard rate (or failure rate) of  it:
$$
r (t ): =  \lim_{\Delta t \to 0^+}\frac{\mathbb{P} (X \in (t, t+ \Delta t )  |  X >t     )   }{\Delta t } ,
$$

To start our discussion, we recall a very simple and useful
result concerning the minimum of several independent, exponentially
distributed, random variables.

Let $\Upsilon_{1},...,\Upsilon_{n}$ denote $n$ independent random variables,
 distributed according to exponential distributions with
parameters $\lambda_{1},...,\lambda_{n}$, respectively, and set%
\begin{equation*}
\Upsilon_{1:n}:=\min\left \{\Upsilon_{1},...,\Upsilon_{n}\right \} .
\end{equation*}

Then we can state (see e.g. \cite{Norris}, Chp. 2)  the following result.

\begin{lemma}
\label{LemmaISect3}  For any $t >0 $ and $j \in [n]$,  the following identities hold
\begin{equation}
\mathbb{P}\left( \Upsilon_{1:n}=\Upsilon_{j},\Upsilon_{1:n}>t\right) =%
\mathbb{P}\left( \Upsilon_{1:n}=\Upsilon_{j})\mathbb{P}(\Upsilon
_{1:n}>t\right) ,  \label{LemmaExp(a)}
\end{equation}%
\begin{equation}
\mathbb{P}\left( \Upsilon_{1:n}=\Upsilon_{j}\right) =\frac{\lambda_{j}}{%
\sum_{s=1}^n\lambda_{s}},  \label{LemmaExp(b)}
\end{equation}%
\begin{equation}
\mathbb{P}\left( \Upsilon_{1:n}>t\right) =\exp   \left \{-t\sum_{s=1}^n\lambda_{s}    \right \}.
\label{LemmaExp(c)}
\end{equation}
\end{lemma}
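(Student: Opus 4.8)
The plan is to exploit the memorylessness of the exponential distribution together with the independence of $\Upsilon_1,\dots,\Upsilon_n$, handling the three identities in a convenient order. First I would establish \eqref{LemmaExp(c)}: since the $\Upsilon_s$ are independent, $\mathbb{P}(\Upsilon_{1:n}>t)=\prod_{s=1}^n\mathbb{P}(\Upsilon_s>t)=\prod_{s=1}^n e^{-\lambda_s t}=\exp\{-t\sum_{s=1}^n\lambda_s\}$, which is immediate. This already shows that $\Upsilon_{1:n}$ is itself exponential with parameter $\Lambda:=\sum_{s=1}^n\lambda_s$.

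Next I would prove \eqref{LemmaExp(b)} and, simultaneously, the joint identity \eqref{LemmaExp(a)}, by computing directly the probability $\mathbb{P}(\Upsilon_{1:n}=\Upsilon_j,\ \Upsilon_{1:n}>t)$. The event in question is $\{\Upsilon_j>t\}\cap\bigcap_{s\neq j}\{\Upsilon_s>\Upsilon_j\}$ (ties have probability zero by absolute continuity). Conditioning on $\Upsilon_j=u$ and using independence,
\begin{equation*}
\mathbb{P}\left(\Upsilon_{1:n}=\Upsilon_j,\ \Upsilon_{1:n}>t\right)=\int_t^\infty \lambda_j e^{-\lambda_j u}\prod_{s\neq j}e^{-\lambda_s u}\,du=\lambda_j\int_t^\infty e^{-\Lambda u}\,du=\frac{\lambda_j}{\Lambda}\,e^{-\Lambda t}.
\end{equation*}
Setting $t=0$ yields \eqref{LemmaExp(b)}; combining the general-$t$ formula with \eqref{LemmaExp(b)} and \eqref{LemmaExp(c)} gives $\mathbb{P}(\Upsilon_{1:n}=\Upsilon_j,\ \Upsilon_{1:n}>t)=\frac{\lambda_j}{\Lambda}e^{-\Lambda t}=\mathbb{P}(\Upsilon_{1:n}=\Upsilon_j)\mathbb{P}(\Upsilon_{1:n}>t)$, which is \eqref{LemmaExp(a)}.

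There is essentially no hard step here; the only points requiring a word of care are that the $n$ events $\{\Upsilon_{1:n}=\Upsilon_j\}$, $j\in[n]$, are disjoint up to a null set and partition $\Omega$ (so that the probabilities in \eqref{LemmaExp(b)} sum to one, a useful sanity check), and the interchange of the integral with the product, which is justified by Tonelli's theorem since all integrands are non-negative. One could alternatively deduce \eqref{LemmaExp(a)} abstractly from the lack-of-memory property — conditionally on $\{\Upsilon_{1:n}>t\}$, the shifted vector $(\Upsilon_1-t,\dots,\Upsilon_n-t)$ has the same law as the original, and the index achieving the minimum is unchanged by the shift — but the direct computation above is shorter and self-contained.
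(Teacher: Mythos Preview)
Your proof is correct and entirely standard. Note that the paper does not actually supply its own proof of this lemma; it simply states the result and cites \cite{Norris}, Chapter~2, so there is no argument in the paper to compare against beyond observing that your direct computation is exactly the kind of proof one finds in such references.
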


We now want to show (see Proposition \ref{Ladispoli}) in which sense this result can be
extended to the  random variable   $X_{1:n}:=\min \{X_{1},\ldots ,X_{n}\}$, where             $X_1, \ldots , X_n$ are not necessarily
independent nor  exponentially distributed. We maintain
however the condition of absolute continuity for the joint probability
distribution and the joint
 density function  will be denoted by $f_{\mathbf{X}}$.
 The latter condition in particular
implies the \emph{no-tie property}
\begin{equation} \label{notie}
{\mathbb{P}}(X_{i}=X_{j})=0,
\end{equation}%
for any $i,j=1,\ldots ,n$, with $i\neq j$, which will be of
basic importance all along the paper.

We respectively denote by $f_{(1)}$, $\bar{F}_{(1)}(t)$,  $h_{(1)}(t)$,   $H_{(1)}(t)$,     the
probability density function, survival function, hazard rate function and
cumulative hazard function of   $X_{1:n}$. Namely
\begin{equation*}
h_{(1)} (t ): = \lim_{\Delta t \to 0^+}\frac{\mathbb{P} (X_{1:n} \in (t, t+ \Delta t]  |  X_{1:n} >t     )   }{\Delta t } ,
\end{equation*}
\begin{equation*}
H_{(1)}(t): = \int_0^t h_{(1)}(s) ds,
\end{equation*}
\begin{equation*}
\bar{F}_{(1)}(t) :=e^{-H_{(1)}(t)},
\end{equation*}
\begin{equation}
f_{(1)}(t) :=h_{(1)}(t)e^{-H_{(1)}(t)}.  \label{si}
\end{equation}

In view of the assumption of absolute
continuity, we can define the following limits, for $j=1, \ldots , n $
\begin{equation} \label{alfaconi}
\gamma_j (t) := \lim_{\Delta t \to 0^+}{\mathbb{P}} (X_j = X_{1:n} | X_{  1:n  }
\in (t ,t+ \Delta t ] ) = {\mathbb{P}}(X_j = X_{1:n} | X_{1:n} =t ),
\end{equation}
\begin{equation}
\mu _{j}(t):= \lim_{\Delta t\rightarrow 0^+}\frac{1}{\Delta t}%
\mathbb{P}(X_{j}\leq t+\Delta t|X_{1:n}>t).
\end{equation}
We notice that, in the case of  regular conditional probabilities,
\begin{equation}  \label{somma}
\sum_{j= 1}^n \gamma_j (t ) =1 .
\end{equation}
Furthermore,
\begin{equation}
\mu _{j}(t)=h_{(1)}(t)\gamma _{j}(t)  \label{miconI} .
\end{equation}
In fact,  since
$$
\mathbb{P }(     \{ X_j  \leq t+\Delta t \}  \cap  \{ X_{1:n } > t\}  ) = \mathbb{P }(     \{ X_j  \leq t+\Delta t \}  \cap  \{ X_{1:n } \in (t, t +\Delta t]\} )
$$
we can write
\begin{align*}\label{perY}
h_{(1)}(t)\gamma _{j}(t) &=
\lim_{\Delta t \to 0^+} \frac{\mathbb{P} (\{ X_j = X_{1:n}\}   \cap \{ X_{1:n} \in (t,t +\Delta t ] \}  )} {\mathbb{P} ( X_{1:n} \in (t,t +\Delta t ]   ) }
\frac{\mathbb{P} ( X_{1:n} \in (t,t +\Delta t ]   )} {      \Delta t \mathbb{P} ( X_{1:n} > t   ) }
\\
&=\lim_{\Delta t \to 0^+} \frac{\mathbb{P} (\{ X_j < t + \Delta t \}   \cap \{ X_{1:n}> t \}  )} { \Delta t \mathbb{P} ( X_{1:n} >t ) }
= \mu_j (t).
\end{align*}

\medskip \medskip

The following two results will have a key role in the next discussion.
\begin{prop}
\label{Ladispoli}
With the notation introduced above, the following properties hold.
\begin{itemize}
\item[a)] For any $t \geq 0 $ one has
\begin{equation}  \label{parteA}
H_{(1)}(t) =      \sum_{i=1}^n     \int_0^t  \mu_i (s) ds .
\end{equation}

\item[b)] For any Borel set $B \in \mathcal{B}(\mathbb{R_+})$, one can write
\begin{equation}  \label{second}
{\mathbb{P}}( X_{i} = X_{1:n} , X_{1:n} \in B ) = \int_{B} \mu_i (s)
e^{-H_{(1)}(s)} ds .
\end{equation}
\end{itemize}
\end{prop}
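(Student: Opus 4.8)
The plan is to obtain both parts of the Proposition as consequences of the two identities already established in this section, \eqref{miconI} (that $\mu_j(t) = h_{(1)}(t)\gamma_j(t)$) and \eqref{somma} (that $\sum_{j=1}^n \gamma_j(t) = 1$), together with the defining relation \eqref{si}, $f_{(1)}(t) = h_{(1)}(t)\, e^{-H_{(1)}(t)}$. First I would prove part (a): summing \eqref{miconI} over $j \in [n]$ and invoking \eqref{somma} gives $\sum_{j=1}^n \mu_j(t) = h_{(1)}(t)$ for (Lebesgue-)almost every $t$; integrating this over $[0,t]$, using $H_{(1)}(t) = \int_0^t h_{(1)}(s)\, ds$ and the fact that the $\mu_j$ are non-negative and measurable (so the finite sum commutes with the integral), yields \eqref{parteA} at once.

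For part (b), the key intermediate step is the disintegration identity $\mathbb{P}(X_i = X_{1:n},\, X_{1:n} \in B) = \int_B \gamma_i(t)\, f_{(1)}(t)\, dt$. Heuristically this is immediate from \eqref{alfaconi}: since $X_{1:n}$, being the minimum of jointly absolutely continuous variables, is itself absolutely continuous with density $f_{(1)}$, the quantity $\gamma_i(t)$ is precisely the conditional probability $\mathbb{P}(X_i = X_{1:n} \mid X_{1:n} = t)$, and conditioning on the value of $X_{1:n}$ gives the formula. To make this rigorous without invoking a general disintegration theorem, I would first verify it for half-open intervals $B = (a,b]$: partition $(a,b]$ into cells $(t_k, t_{k+1}]$, factor $\mathbb{P}(X_i = X_{1:n},\, X_{1:n} \in (t_k, t_{k+1}]) = \mathbb{P}(X_{1:n} \in (t_k, t_{k+1}]) \cdot \mathbb{P}(X_i = X_{1:n} \mid X_{1:n} \in (t_k, t_{k+1}])$, and let the mesh tend to zero; the right-hand side is then a Riemann sum converging to $\int_a^b \gamma_i(t) f_{(1)}(t)\, dt$ by the definitions \eqref{alfaconi} and \eqref{si}, the error being controlled using $0 \le \gamma_i \le 1$ and dominated convergence. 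The passage from intervals to an arbitrary Borel set $B$ is a routine monotone-class argument, both sides being finite measures in $B$. Once this formula is in hand, I would substitute $f_{(1)}(t) = h_{(1)}(t)\, e^{-H_{(1)}(t)}$ and use \eqref{miconI} in the form $\gamma_i(t)\, h_{(1)}(t) = \mu_i(t)$ to turn $\int_B \gamma_i(t) f_{(1)}(t)\, dt$ into $\int_B \mu_i(t)\, e^{-H_{(1)}(t)}\, dt$, which is exactly \eqref{second}; combining with part (a), the exponent may equivalently be written as $-\sum_{j=1}^n \int_0^t \mu_j(s)\, ds$.

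The only genuinely delicate point — and the one I expect to be the main obstacle — is the rigorous justification of this disintegration step, namely that the set function $B \mapsto \mathbb{P}(X_i = X_{1:n},\, X_{1:n} \in B)$ is absolutely continuous with density exactly $\gamma_i\, f_{(1)}$. Everything else (part (a) and the concluding algebraic rewriting) is routine. Since the excerpt already carries out the analogous shrinking-interval computation that led to \eqref{miconI}, I would expect the authors to dispatch this point briefly, essentially by appealing to the probabilistic meaning of $\gamma_i$ together with the absolute continuity of the law of $X_{1:n}$.
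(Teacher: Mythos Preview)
Your proposal is correct and follows essentially the same route as the paper: for (a) the paper sums \eqref{miconI} over $i$, uses \eqref{somma} to get $h_{(1)}=\sum_i\mu_i$, and integrates; for (b) it writes $\mathbb{P}(X_j=X_{1:n},\,X_{1:n}\in B)=\int_B f_{(1)}(s)\,\gamma_j(s)\,ds$ directly from the meaning of $\gamma_j$ as a conditional probability, then substitutes \eqref{si} and \eqref{miconI}. The only difference is that the paper dispatches the disintegration step in one line without the partition/monotone-class justification you outline, exactly as you anticipated in your final paragraph.
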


\begin{proof}
From \eqref{somma} and \eqref{miconI} one has $h_{(1)} (s)=\sum_{i=1}^n \mu_i
(s) $  and then \eqref{parteA}.
Taking into account the positions \eqref{si} and \eqref{miconI} we obtain
$$
{\mathbb{P}}( X_{j} = X_{1:n} , X_{1:n} \in B ) = \int_{B} f_{(1)} (s ) \mathbb{P} ( X_j = X_{1:n} | X_{1:n} =s )ds =
$$
\begin{equation}  \label{secondproof}
 \int_{B} h_{(1)} (s ) e^{-H_{(1)}  (s)} \gamma_i (s)ds,
\end{equation}
that is equal to \eqref{second}.
\end{proof}

 Denote by $\Lambda $  the Lebesgue measure on $ (\mathbb{R}_{+},\mathcal{B}(\mathbb{R_{+}}))$.

\begin{theorem}
\label{th:1} The following two statements are equivalent:

\begin{itemize}
\item[a)] For each $B \in \mathcal{B}(\mathbb{R_+})$ and $i, j = 1, \ldots n
$,
$$
{\mathbb{P}}( X_{1:n} \in B , X_{i} = X_{1:n} ) \leq {\mathbb{P}}(
X_{1:n} \in B , X_{j} = X_{1:n} );
$$
\item[b)] $\Lambda (\{ t\in \mathbb{R}_+ : \mu_i (t) > \mu_j (t)\}) =0$.
\end{itemize}
\end{theorem}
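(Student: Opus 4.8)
The plan is to reduce both statements to a claim about the sign of a single integrand and then invoke the elementary measure--theoretic fact that a function whose integral over every Borel set is nonnegative must itself be nonnegative $\Lambda$--a.e.

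First I would use Proposition \ref{Ladispoli}(b) to rewrite the two quantities appearing in (a): for every $B\in\mathcal{B}(\mathbb{R}_+)$,
$$
\mathbb{P}(X_{1:n}\in B,\,X_j=X_{1:n})-\mathbb{P}(X_{1:n}\in B,\,X_i=X_{1:n})=\int_B g_{ij}(s)\,ds,\qquad g_{ij}(s):=\bigl(\mu_j(s)-\mu_i(s)\bigr)e^{-H_{(1)}(s)}.
$$
Thus statement (a), for a fixed pair $i,j$, says exactly that $\int_B g_{ij}\,d\Lambda\ge 0$ for every Borel $B$. Since both probabilities are bounded by $1$, the two nonnegative functions $\mu_j e^{-H_{(1)}}$ and $\mu_i e^{-H_{(1)}}$ are $\Lambda$--integrable over every $B$ and may legitimately be subtracted, so $g_{ij}$ is a well defined, a.e. finite function. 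Recall also that $e^{-H_{(1)}(s)}=\bar{F}_{(1)}(s)=\mathbb{P}(X_{1:n}>s)$ is strictly positive exactly on the range of $s$ where the conditional probabilities defining $\mu_i,\mu_j$ are meaningful; there the sign of $g_{ij}(s)$ coincides with the sign of $\mu_j(s)-\mu_i(s)$.

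The implication (b) $\Rightarrow$ (a) is then immediate: if $\Lambda(\{\mu_i>\mu_j\})=0$ then $g_{ij}\ge 0$ $\Lambda$--a.e., and integrating over an arbitrary $B$ yields the inequality in (a). For (a) $\Rightarrow$ (b) I would argue in the standard way: put $B_m:=\{s:g_{ij}(s)<-1/m\}$ for $m\in\mathbb{N}$. By (a) and the displayed identity, $0\le\int_{B_m}g_{ij}\,d\Lambda\le-\Lambda(B_m)/m$ (the left-hand integral being finite because $\int_{B_m}\mu_i e^{-H_{(1)}}\le 1$), hence $\Lambda(B_m)=0$; since $\{g_{ij}<0\}=\bigcup_m B_m$ we get $g_{ij}\ge 0$ $\Lambda$--a.e., which by the sign remark above is precisely $\mu_i\le\mu_j$ $\Lambda$--a.e., i.e. (b). Finally, the whole argument is for a fixed pair $(i,j)$, and statements (a) and (b) are the conjunctions over all pairs of their single--pair versions, so the pairwise equivalence delivers the full equivalence.

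The only point needing a word of care is the one flagged above, namely that $e^{-H_{(1)}(s)}>0$ exactly on the set where $\mu_i,\mu_j$ are defined, so that comparing the sign of $g_{ij}$ with that of $\mu_j-\mu_i$ is legitimate; the rest is just the routine ``zero integral over every set implies zero function'' lemma applied to $g_{ij}$.
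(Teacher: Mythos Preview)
Your proof is correct and follows essentially the same route as the paper's: both invoke Proposition~\ref{Ladispoli}(b) to reduce the probability inequality to an integral inequality, deduce (b)$\Rightarrow$(a) immediately, and prove (a)$\Rightarrow$(b) by testing on the level set where the integrand has the wrong sign (you use $B_m=\{g_{ij}<-1/m\}$, the paper uses $B=\{\mu_i>\mu_j+\varepsilon\}$, which is the same idea). Your explicit remark that $e^{-H_{(1)}(s)}>0$ precisely where $\mu_i,\mu_j$ are defined is a useful clarification that the paper leaves implicit when it asserts $\int_B e^{-H_{(1)}(s)}\,ds>0$.
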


\begin{proof}
The implication b) $\Rightarrow $ a) is immediate in view of the identity %
\eqref{second}.

The implication a) $\Rightarrow $ b) is proved by contradiction.

Assume, in fact, $\Lambda (\{ t\in \mathbb{R}_+ : \mu_i (t) > \mu_j (t)\})
>0 $ then, by continuity of probability measures, there exists $\varepsilon >0 $ such
that $\Lambda (\{ t\in \mathbb{R}_+ : \mu_i (t) > \mu_j (t) + \varepsilon
\}) >0 $. Therefore, by setting
$$
B = \{ t\in \mathbb{R}_+ : \mu_i (t) >
\mu_j (t) + \varepsilon \}
$$
one obtains
\begin{equation*}
{\mathbb{P}}( X_{i} = X_{1:n} , X_{1:n} \in B ) - {\mathbb{P}}( X_{j} =
X_{1:n} , X_{1:n} \in B ) \geq \varepsilon \int_{B} e^{-H_{(1)}(s)} ds >0 .
\end{equation*}
\end{proof}

It is convenient, at this step, to recall the definition of m.c.h.r. functions for the non-negative random variables $X_{1},...X_{n}$. 
We denote by $
X_{1:n},...,X_{n:n}$ the corresponding order statistics.
For $A\subseteq\left[  n\right]  $ with $|A|>1$, set
\[
X_{1:A}:=\min_{i\in A}X_{i} .
\]

In particular,  we obtain
\begin{equation*}
X_{1:[n]}:=X_{1:n}=\min_{1\leq j\leq n}X_{j}.
\end{equation*}
In the following definition for a given subset $I \subset [n]$ we will consider the random variable $  X_{1:\tilde I}  $, where the symbol $\tilde I $  denotes the complementary set $[n] \setminus I$.

\begin{definition} \label{defi1}
For a fixed index $j\in\lbrack n]$, an ordered set $I=(i_{1},...,i_{k})$ $%
\subset\lbrack n]$ with $j\notin I$, and an ordered sequence $0<
t_{1}<...< t_{k}$, the Multivariate Conditional Hazard Rate function $%
\lambda_{j}(t|I;t_{1},...,t_{k})$ is defined as follows:
\begin{equation}
\lambda_{j}(t|I;t_{1},...,t_{k}):=\lim_{\Delta t\to 0^+}\frac{1}{\Delta
t}\mathbb{P}(X_{j}\leq t+\Delta
t|X_{i_{1}}=t_{1},...,X_{i_{k}}=t_{k},    X_{1: \tilde I}    >t).  \label{DefMCHR}
\end{equation}
Furthermore, one puts%
\begin{equation}
\lambda_{j}(t|\emptyset):=\lim_{\Delta t\to 0^+}\frac{1}{\Delta t}
\mathbb{P}(X_{j}\leq t+\Delta t|     X_{1:n}   >t).  \label{DefMCHR2}
\end{equation}
\end{definition}
For what specifically concerns the position in \eqref{DefMCHR2}, we must notice that we reobtain nothing else than the functions defined in \eqref{miconI}; more precisely
\begin{equation}\label{uguali}
\mu_j (t) = \lambda_j (t | \emptyset ) .
\end{equation}
For this reason, the symbol $\mu_j (t)$ will not be used anymore, from now on.

\begin{rem} \label{general}
The limits considered in the above definition make sense in view of the
assumption of absolute continuity and the quantity $%
\lambda_{j}(t|I;t_{1},...,t_{k})$ can be seen as the failure intensity, at
time $t$, associated to the conditional distribution of the variable $X_{j}$, given the observation of the \textit{dynamic history}
\begin{equation}
\mathfrak{h}_{t}=:\{X_{i_{1}}=t_{1},...,X_{i_{k}}=t_{k},     X_{1:\tilde I}      >t\}.
\label{DynHistory}
\end{equation}

The functions $\lambda_{j}(t|I;t_{1},...,t_{k})$ and $\lambda_{j}(t|%
\emptyset)$ can be computed in terms of the joint density function $%
f_{\mathbf{X}}$. On the other hand, based on the knowledge of the
functions $\lambda_{j}(t|I;t_{1},...,t_{k})$ and $\lambda_{j}(t|\emptyset)$,
one can recover the  function $  f_{\mathbf{X}}       $.
In fact, the following formula holds for $0< x_{1}<...< x_{n}$
\begin{equation*}
  f_{\mathbf{X}}        \left( x_{1},...,x_{n}\right)
=\lambda_{1}(x_{1}|\emptyset)\exp \left \{-    \sum_{j=1}^{n}    \int_{0}^{x_{1}}
   \lambda_{j}(u|\emptyset) du   \right \}\times
\end{equation*}%
\begin{equation*}
\times \lambda_{2}(x_{2}|\{1\};x_{1})\exp   \left   \{-   \sum_{j=2}^{n}    \int_{x_{1}}^{x_{2}}
\lambda_{j}(u|\{1\};x_{1}) du   \right \}\times...
\end{equation*}
\begin{equation*}
\times\lambda_{k+1}(x_{k+1}|\{1,...,k\};x_{1},...,x_{k})\exp \left \{-    \sum_{j=k+1}^{n}
\int_{x_{k}}^{x_{k+1}}
   \lambda_{j}(u|\{1,...,k\};x_{1},...,x_{k})
du   \right \}\times...
\end{equation*}
\begin{equation}
\times\lambda_{n}\left( x_n|\{1,...,n-1\};x_{1},...,x_{n-1}\right) \exp
\left \{-\int_{x_{n-1}}^{x_{n}}\lambda_{n}(u|\{1,...,n-1\};x_{1},...,x_{n-1})du     \right \}.
\label{JointDensInTermsmchr}
\end{equation}

Similar expressions hold when $x_{1},...,x_{n}$ are such that $x_{\pi
(1)}<...< x_{\pi (n)}$, for some permutation $\pi $. For proofs,
details, and for general aspects see \cite{ShaSha90}, \cite{ShaSha}, and the
review paper \cite{ShaSha15}.
\end{rem}

\begin{rem} \label{R2}
In the reliability field, the variables $X_{1},...,X_{n}$ are interpreted as
the random lifetimes of $n$ components in a system. From the identities \eqref{miconI}
and \eqref{uguali} one immediately obtains the relation
\[
\lambda_{j}(t|\emptyset)=h_{(1)}(t)\gamma_{j}(t),
\]
tying the specific m.c.h.r. function $  \lambda_{j}(t|\emptyset)  $ with the
conditional probability
$    \gamma_{j}(t) $ defined in \eqref{alfaconi} and with the
univariate failure rate of the minimum $X_{1:n}$, i.e. the lifetime of the series system.

In the frame of system reliability, indexes of importance of a component are
generally relevant notions. In that context, the conditional probabilities  $\lambda_{j}(t|\emptyset)$ and $ \gamma_{j}(t) $ are related with the Barlow-Proschan indexes of importance of
stochastically dependent components (see \cite{BarPro1975,  Iyer1992, MaMa2013, MN2019}) in the special case when the system is a
series. An integral expression for Barlow-Proschan index in a series system can be obtained by specializing a result
given in  \cite{MN2019}. Such an expression is  alternative to the integral one given in \eqref{secondproof} above,
with $B= \mathbb{R}_+$.   In  \cite{MN2019},
 the validity of  expression \eqref{somma} have been pointed out for
general coherent systems.

We notice that some arguments in \cite{MN2019} might also be applied to conditional probabilities of the type $\mathbb{P}(X_{j}
=X_{k+1:n}|X_{k+1:n}=t;\mathfrak{h}_{t})$ which are related with conditional importance
indexes for surviving components in a series system surviving at time $t$.

\end{rem}

\begin{rem}    \label{R3}
Let us  look at the conditional distribution of the residual lifetimes of
components surviving at time $t$ given the dynamic history $\mathfrak{h}_{t}$
in \eqref{DynHistory}.  Similarly to what noticed for $  \lambda_{j}(t|\emptyset)   $ in the remark above, and conditioning upon the observation
$\mathfrak{h}_{t}$, one can obtain an expression for the m.c.h.r. function
$ \lambda_{j} (t|I;t_{1},...,t_{k})$ in terms of the conditional probability
$$
\mathbb{P}(X_{j}=X_{k+1:n}|X_{k+1:n}=t;\mathfrak{h}_{t})
$$ and of the
conditional univariate failure rate of $X_{k+1:n}$ (namely, the residual
lifetime of the series system made with the components  surviving at time
$t$).

On the other hand, the m.c.h.r. functions $\lambda_{j} (t|I;t_{1},...,t_{k}) $ can be used to
describe  the conditional distribution of the residual lifetimes of
components,  given  $\mathfrak{h}_{t}$. Such a description  will  be presented
in formula \eqref{lamba}. It can be interesting to compare it
with the alternative expression     that can be given in terms of copula-based
representations of joint distributions of lifetimes (see in particular
\cite{DF2014} and \cite{DN2017}).
\end{rem}

\bigskip

 As a direct corollary of Proposition \ref{Ladispoli} we obtain that, for any vector of dependent variables, probabilities of events related to the behavior of their minimum are equal to probabilities of corresponding events for a vector of independent variables.
 We point out that, in the case of independence, the function $\lambda_{j}(\cdot|\emptyset)$ coincides with the ordinary failure rate functions $r_{j} (\cdot )$ and
 we can more precisely state the following results.
\begin{prop} \label{prop-ind}
Let $(X_1, ...,X_n)$ be a vector with m.c.h.r. functions $\lambda_{j}(t|\emptyset)$
and take independent random variables $Z_{1},...,Z_{n}$, with ordinary failure rate functions $r_{j}$ given by
\[
r_{j}(t):=\lim_{\Delta t\to  0^+} \frac{
\mathbb{P}\left(  Z_{j}<t+\Delta t|Z_{j}>t\right)}{\Delta t} = \lambda_{j}(t |\emptyset) .
\]
Then
\begin{equation} \label{indipendente}
\mathbb{P} ( X_{i} = X_{1:n} ,  \,\,  X_{1:n} \in B   ) = \mathbb{P} ( Z_{i} = Z_{1:n} , \,\,   Z_{1:n} \in B   )
\end{equation}
for any   $i \in [n ]$ and any Borel set $ B$. 
\end{prop}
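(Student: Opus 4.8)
The plan is to reduce \eqref{indipendente} to the already–proven identity \eqref{second} in Proposition \ref{Ladispoli}, exploiting the fact that the right-hand side of \eqref{second} depends on the joint law of $\mathbf X$ only through the functions $\lambda_j(\cdot\,|\emptyset)$ (equivalently $\mu_j$) and the associated cumulative hazard $H_{(1)}$, and showing that these quantities agree for the two vectors. First I would apply part b) of Proposition \ref{Ladispoli} to the vector $(X_1,\dots,X_n)$, obtaining
\begin{equation*}
\mathbb{P}( X_{i} = X_{1:n},\, X_{1:n}\in B) = \int_B \lambda_i(s\,|\emptyset)\, e^{-H_{(1)}(s)}\, ds,
\end{equation*}
where by part a) one has $H_{(1)}(t)=\sum_{j=1}^n\int_0^t\lambda_j(u\,|\emptyset)\,du$. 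Then I would apply the same Proposition \ref{Ladispoli} to the vector $(Z_1,\dots,Z_n)$, which is legitimate since it too is non-negative with an absolutely continuous joint law (the $Z_j$ being independent with hazard rates $r_j$, hence each absolutely continuous, and independence preserves joint absolute continuity). This yields an analogous formula for $\mathbb{P}(Z_i=Z_{1:n},\,Z_{1:n}\in B)$ in terms of the m.c.h.r. functions $\lambda_j^{Z}(\cdot\,|\emptyset)$ of the $Z$-vector and the corresponding $H^{Z}_{(1)}$.

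The crux is then to verify that $\lambda_j^{Z}(t\,|\emptyset)=r_j(t)$ for each $j$. This is the point flagged in the paragraph preceding the statement (``in the case of independence, the function $\lambda_j(\cdot|\emptyset)$ coincides with the ordinary failure rate functions $r_j(\cdot)$''), and it is where I expect the only real work to lie. Unwinding the definition \eqref{DefMCHR2}, one must compute
\begin{equation*}
\lambda_j^{Z}(t\,|\emptyset)=\lim_{\Delta t\to 0^+}\frac{1}{\Delta t}\,\mathbb{P}\bigl(Z_j\le t+\Delta t \,\big|\, Z_{1:n}>t\bigr),
\end{equation*}
and the event $\{Z_{1:n}>t\}=\bigcap_{s=1}^n\{Z_s>t\}$ factorizes by independence, so conditioning on it affects only the $Z_j$-marginal (the other factors cancel), leaving $\mathbb{P}(Z_j\le t+\Delta t\,|\,Z_j>t)$; dividing by $\Delta t$ and passing to the limit gives exactly $r_j(t)$. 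By hypothesis $r_j(t)=\lambda_j(t\,|\emptyset)$, so $\lambda_j^{Z}(t\,|\emptyset)=\lambda_j(t\,|\emptyset)$ for every $j$, and consequently $H^{Z}_{(1)}=H_{(1)}$ as well, by part a).

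Substituting these equalities into the two integral representations, the right-hand sides coincide for every Borel $B$, which is precisely \eqref{indipendente}. I would close with a brief remark that a density-based argument (via formula \eqref{JointDensInTermsmchr}) would also work but is less transparent, since the m.c.h.r. route isolates exactly the information — the first-passage hazard $\lambda_j(\cdot\,|\emptyset)$ — that governs $\{X_i=X_{1:n}\}$. The main obstacle, such as it is, is purely bookkeeping: making sure the conditioning-and-cancellation step for the independent vector is stated cleanly, and noting explicitly that Proposition \ref{Ladispoli} is applicable to $(Z_1,\dots,Z_n)$; no delicate estimates are needed.
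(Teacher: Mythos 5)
Your proof is correct and follows essentially the same route as the paper: apply item b) of Proposition \ref{Ladispoli} to both $\mathbf X$ and $\mathbf Z$ and then observe that the resulting integrands coincide. The paper's own proof is just that one sentence; you merely make explicit the (easy) verification, already stated in the surrounding text, that for an independent vector the functions $\lambda_j(\cdot\,|\,\emptyset)$ reduce to the ordinary hazard rates $r_j$.
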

\begin{proof} In order to prove \eqref{indipendente} it is enough to apply, to both the vectors of random variables $(X_1, \ldots , X_n)$
and $(Z_1, \ldots , Z_n)$, item b) of Proposition \ref{Ladispoli}.
\end{proof}
For our purposes it is also useful to specialize Proposition \ref{Ladispoli} and Proposition \ref{prop-ind} to the limiting case $B = [0, \infty )$. Thus we obtain that the analysis of the minimum among several random variables can be reduced to the case of independent variables by means of the functions $\lambda_{1}(s|\emptyset) , \ldots , \lambda_{n}(s|\emptyset) $ by
considering the following formulas.
\begin{equation}
\mathbb{P}\left(  X_{1:n }>t\right)  =\exp \left \{-     \sum_{i=1}^{n}         \int_{0}^{t}
     \lambda_{i}(s|\emptyset)ds  \right \}  =\exp \left \{-   \sum_{i=1}^{n}           \int_{0}^{t}
     r_{i}(s)ds  \right \} = \mathbb{P}\left(  Z_{1:n }>t\right)
,\label{Exp(c)Gen}
\end{equation}
\begin{equation}\label{Rome}
\begin{aligned}
{\mathbb{P}}( X_{i} = X_{1:n}  ) &= \int_{0}^{\infty}  \lambda_{i}(s|\emptyset)
\exp\{-    \sum_{i=1}^{n}\lambda_{i}(s|\emptyset)    \} ds
\\
&= \int_{0}^{\infty} r_{i}(s)
\exp\{-    \sum_{i=1}^{n}r_{i}(s)    \} ds = {\mathbb{P}}( Z_{i} = Z_{1:n}  )
\end{aligned}
\end{equation}
for any $i \in [n]$.

\medskip

Before continuing, the method of m.c.h.r. functions for
describing the behaviour of the minimum among dependent variables  will now
 be further demonstrated by means of some relevant examples.

\begin{example}
\label{remo3}  (The case of exchangeability)
When $X_{1},...,X_{n}$ are exchangeable, then the dependence of $\lambda
_{j}(t|\emptyset)$ on the index $j$ is obviously dropped, namely for a
suitable function $\lambda(\cdot|\emptyset)$ and for $j=1,...,n$, $t>0$ $,$%
\begin{equation}
\lambda_{j}(t|\emptyset)=\lambda(t|\emptyset). \label{ExchCond}%
\end{equation}

Thus we obtain%

\begin{equation}
\mathbb{P}\left(  X_{1:n }>t\right)  =\exp    \left \{-n\int_{0}%
^{t}\lambda(s|\emptyset)ds \label{Exp(c)Exch} \right \}   ,
\end{equation}

\bigskip%

\begin{equation}
\mathbb{P}\left(  X_{1:n  }=X_{j}\right)  =\frac{1}{n}
\label{Exp(b)Exch}.
\end{equation}

Notice that the same identities do hold even if $X_{1},...,X_{n}$ are not
exchangeable, provided the above condition \eqref{ExchCond} holds.
\end{example}

\begin{example} \label{remerei}
 (The case of conditional independence and identical exponential distribution).
Let $\Theta$ be a non-negative random variable with distribution $\Pi_{\Theta
}$ and let $X_{1},...,X_{n}$ be conditionally independent and exponentially
distributed given $\Theta$, i.e.
\[
\mathbb{P}\left(  X_{1}>x_{1},...,X_{n}>x_{n}\right)  =\int_{0}^{\infty}%
\exp\{-\theta\sum_{i=1}^{n}x_{i}\}\Pi_{\Theta}\left(  d\theta\right)  .
\]

In this case one has (for details see e.g. \cite{SpiBook})
\[
\lambda_{j}(t|\emptyset)=                     \mathbb{E}(\Theta|X_{ 1:n  }>t) = \int_{0}^{\infty}\theta\Pi_{\Theta}\left(
d\theta|X_{1:n }>t\right)  ,
\]
where $\Pi_{\Theta}\left(  \cdot|X_{1:n }>t\right)  $ denotes
the a posteriori distribution of $\Theta$, given the observation $
X_{1:n  }>t $. Moreover%

\[
\mathbb{P}\left(  X_{1:n  }>t\right)  =\int_{0}^{\infty}%
\exp\{-n \theta t  \}\Pi_{\Theta}\left(  d\theta\right)
\]
and since $X_{1},...,X_{n}$ are, in particular, exchangeable%

\[
\mathbb{P}\left(  X_{1:n }=X_{j}\right)  =\frac{1}{n}.
\]
\end{example}

\begin{example}
\label{remo4} The following case can be considered as a generalization of the case of
independent, exponential, variables: consider dependent random variables  $\Upsilon_{1},...,\Upsilon_{n}$
such that for $j=1,...,n$, the ratio
\[
\frac{\lambda_{j}(t|\emptyset)}{\sum_{i=1}^{n}\lambda
_{i}(t|\emptyset)}%
\]
does not depend on the variable $t$. In such a case, the identities
(\ref{LemmaExp(a)}) and (\ref{LemmaExp(b)}) hold.
\end{example}

\begin{example}
\label{esempioFABIO}
A more special class of survival models generalizing the case of independent,
exponential, variables is the one of \emph{time-homogeneous load-sharing}
models, characterized by the condition
\[
\lambda_{j}(t|\emptyset)=r_{j}\left(  \emptyset\right)  ,\lambda_{j}%
(t|I;t_{1},...,t_{k})=r_{j}\left(  I\right)  ,
\]
for a suitable family of constants $\{r_{j}\left(  \emptyset\right)
;r_{j}\left(  I\right)  ;j\in\left[  n\right]  ,I\subset\left[  n\right]
,i\notin I\}$.

Several theoretical and applied aspects of such survival models have been
studied in different fields and, in particular, in the reliability literature.
See e.g. \cite{Spiz18} and references cited therein.

By limiting attention to this class of models, useful examples can be
constructed for different types of properties related with the arguments of
this paper.  In particular, in the case of time-homogeneous load-sharing model
we obtain from Proposition 1
\[
\mathbb{P}\left(  X_{1:n}=X_{j},X_{1:n}>t\right)  =r_{j}(\emptyset
)\exp\{-t\sum_{i=1}^{n}r_{i}\left(  \emptyset\right)  \}.
\]
\end{example}

Still considering time-homogeneous load-sharing models,
it is also useful recalling attention on the following property of conditional
distribution of the residual lifetimes
\[
\ X_{j_{1}}-t,...,X_{j_{n-k}}-t
\]
given the observation of a dynamic history $\mathfrak{h}_{t}$ as in
\eqref{DynHistory}. Of course, conditionally on $\mathfrak{h}_{t}$, the
joint distribution of $\left(  X_{j_{1}}-t,...,X_{j_{n-k}}-t\right)  $ is
generally absolutely continuous if the one of $\left(  X_{1},...,X_{n}\right)
$ is such. Furthermore it is a time-homogeneous load-sharing model if joint
distribution of $\left(  X_{1},...,X_{n}\right)  $ is such and one has the
simple relation%
\[
\widehat{r}_{j}(\emptyset)=r_{j}(I),j\in\widetilde{I}.
\]

From Proposition 1 we obtain, for $j\in\widetilde{I}$,%

\[
\mathbb{P}\left(  X_{\tilde{I} }=X_{l},X_{1:
\tilde{I} }>t+s|\mathfrak{h}_{t}\right)  =r_{l}(I)\exp \left
\{-s\sum_{j\in\tilde{I}}r_{j}\left(  I\right)  \right  \}.
\]

Denote by
 $J_{1},J_{2},...J_{k}$ the random  indices such that
\[
X_{1:n}=X_{J_{1}},...,X_{k:n}=J_{k}.
\]
By applying the product formula of conditional probabilities, we thus can also
obtain that the joint density function
\[
f_{X_{1:n},\ldots,X_{k:n},J_{1},\ldots,J_{k}}(t_{1},t_{2},\ldots,t_{k}%
,j_{1},j_{2},\ldots,j_{k})
\]
of $(X_{1:n},\ldots,X_{k:n},J_{1},\ldots,J_{k})$, $k=1,\ldots,n$, with respect
to the product of $k$-dimensional Lebesgue measure on $[0,\infty)^{k}$ and
$k$-dimensional counting measure on $[n]  ^{k}$ is the
product of terms of the form
\begin{equation} \label{factori}
r_{j_{h+1}}(\{j_{1},\ldots,j_{h}\})\exp\{-(t_{h+1}-t_{h})\sum_{l\neq
j_{1},\ldots,j_{h}}r_{l}\left( \{ j_{1},\ldots,j_{h}   \}   \right)  \}.
\end{equation}

Using once again Proposition 1, the argument presented  above
can easily be extended to the case of an arbitrary absolutely continuous
model, characterized in terms of its m.c.h.r. functions.

First of all we notice that, conditionally on a dynamic history $\mathfrak{h}_{t}$, the
joint distribution of residual lifetimes $\left(  X_{j_{1}}-t,...,X_{j_{n-k}}-t\right)  $ is
characterized by the m.c.h.r. functions
\begin{equation}\label{lamba}
\widehat{\lambda}_j^{( \mathfrak{h}_{t}  )} (t | \emptyset ) = \lambda_j (t | I; t_1, \ldots , t_h)  , \,\,\, j \in \tilde I .
\end{equation}
We can thus state the following proposition.
\begin{prop}\label{jointdensityordstataug}
The joint density function
\[
f_{X_{1:n},\ldots,X_{k:n},J_{1},\ldots,J_{k}}(t_{1},t_{2},\ldots,t_{k}%
,j_{1},j_{2},\ldots,j_{k})
\]
of $(X_{1:n},\ldots,X_{k:n},J_{1},\ldots,J_{k})$, $k=1,\ldots,n$, with respect
to the product of $k$-dimensional Lebesgue measure on $[0,\infty)^{k}$ and
 counting measure on $[n]  ^{k}$ is the
product of terms of the form
\begin{equation}\label{factors}
\lambda_{j_{h+1}}(t_{h+1};\{j_{1},\ldots,j_{h}\}; t_1, \ldots, t_h)\exp\{-(t_{h+1}-t_{h})\sum_{l\neq
j_{1},\ldots,j_{h}}\lambda_{l}\left(   t_{h+1};\{j_{1},\ldots,j_{h}\}; t_1, \ldots, t_h       \right) \} .
\end{equation}
\end{prop}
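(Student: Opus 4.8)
The plan is to reduce Proposition \ref{jointdensityordstataug} to the time-homogeneous load-sharing case already treated via \eqref{factori}, or rather to mimic that derivation step by step in the general setting, using formula \eqref{lamba} and an induction on $k$. The key observation is that the joint density of $(X_{1:n},\ldots,X_{k:n},J_{1},\ldots,J_{k})$ factors through a telescoping product of one-step conditional densities
\[
f_{X_{h+1:n},J_{h+1}\mid X_{1:n},\ldots,X_{h:n},J_{1},\ldots,J_{h}}\bigl(t_{h+1},j_{h+1}\mid t_{1},\ldots,t_{h},j_{1},\ldots,j_{h}\bigr),
\]
by the chain rule for conditional densities (valid here because of the standing absolute-continuity assumption, which also guarantees the no-tie property \eqref{notie} so that the order statistics and the record indices are well defined). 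Thus it suffices to show that each such one-step factor equals \eqref{factors}.

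For the one-step factor, I would condition on the dynamic history
$\mathfrak{h}_{t_{h}}=\{X_{j_{1}}=t_{1},\ldots,X_{j_{h}}=t_{h},\,X_{1:\tilde I}>t_{h}\}$ with $I=\{j_{1},\ldots,j_{h}\}$ and $\tilde I=[n]\setminus I$. By Remark \ref{R3} and formula \eqref{lamba}, conditionally on $\mathfrak{h}_{t_{h}}$ the residual lifetimes $(X_{j}-t_{h})_{j\in\tilde I}$ form an absolutely continuous vector whose m.c.h.r.\ functions at the empty history are $\widehat{\lambda}^{(\mathfrak{h}_{t_{h}})}_{j}(\,\cdot\mid\emptyset)=\lambda_{j}(\,\cdot\mid I;t_{1},\ldots,t_{h})$. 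Now apply Proposition \ref{Ladispoli}(b) — equivalently \eqref{second}, or its specialization \eqref{Rome}, together with \eqref{Exp(c)Gen} — to this conditional model: the probability that $X_{1:\tilde I}$ exceeds $t_{h}+s$ and equals $X_{l}$ with $X_{1:\tilde I}\in B$ is $\int_{B}\lambda_{l}(u\mid I;t_{1},\ldots,t_{h})\exp\{-\sum_{l'\in\tilde I}\int_{t_{h}}^{u}\lambda_{l'}(v\mid I;t_{1},\ldots,t_{h})dv\}\,du$. Differentiating in $t_{h+1}$ (i.e.\ taking $B$ to be an infinitesimal interval around $t_{h+1}$) gives precisely the factor \eqref{factors} with $h$ in place of the running index, which is what is needed. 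Note that $X_{h+1:n}=X_{1:\tilde I}$ and $J_{h+1}=\arg\min_{j\in\tilde I}X_{j}$ on the event in question, so the one-step conditional density of $(X_{h+1:n},J_{h+1})$ given $\mathfrak{h}_{t_{h}}$ is exactly this expression.

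Assembling the telescoping product over $h=0,1,\ldots,k-1$ (with the convention $t_{0}=0$ and the base case $h=0$ being \eqref{second}/\eqref{Rome} applied to $(X_{1},\ldots,X_{n})$ itself, where $\lambda_{j}(\,\cdot\mid\emptyset)$ is the relevant rate) yields the claimed product formula. The main obstacle — and the point that deserves the most care — is the justification that one may legitimately \emph{condition on the measure-zero event} $\mathfrak{h}_{t_{h}}$ and treat the resulting conditional law as an absolutely continuous model to which Proposition \ref{Ladispoli} applies; this is exactly the content of Remark \ref{R3} and of the disintegration underlying \eqref{JointDensInTermsmchr}, so I would invoke those, but one should check that the regular conditional distributions behave as the formulas demand (in particular that \eqref{somma} holds in the conditional model, guaranteeing $\widehat{h}_{(1)}=\sum_{j\in\tilde I}\widehat{\lambda}_{j}$). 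A secondary, purely bookkeeping, point is matching the notation: the statement writes the rate evaluated at $t_{h+1}$ in both the leading factor and the exponent, whereas the integral in the exponent should run over the dummy variable from $t_{h}$ to $t_{h+1}$; I would phrase the exponent as $\exp\{-\sum_{l\neq j_{1},\ldots,j_{h}}\int_{t_{h}}^{t_{h+1}}\lambda_{l}(u;\{j_{1},\ldots,j_{h}\};t_{1},\ldots,t_{h})\,du\}$ to keep things unambiguous, and remark that this reduces to \eqref{factori} when all the m.c.h.r.\ functions are constant in their first argument.
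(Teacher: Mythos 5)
Your proof fills in precisely the argument the paper sketches (there is no formal proof attached to this proposition in the paper, only the surrounding discussion of \eqref{lamba}, the chain-rule factorization, and the time-homogeneous prototype \eqref{factori}): condition on the dynamic history $\mathfrak{h}_{t_h}$, identify the conditional residual model's m.c.h.r.'s via \eqref{lamba}, apply Proposition \ref{Ladispoli}(b) to that conditional model, and telescope. Your closing ``bookkeeping'' remark is in fact a correction of the statement rather than a cosmetic choice: the exponent in \eqref{factors}, namely $\exp\{-(t_{h+1}-t_{h})\sum_{l}\lambda_{l}(t_{h+1}|\cdot)\}$, is only right when the $\lambda_l$'s are constant in the time argument on $[t_h,t_{h+1}]$, as in the time-homogeneous case \eqref{factori}; in general it should read $\exp\{-\sum_{l\neq j_1,\ldots,j_h}\int_{t_h}^{t_{h+1}}\lambda_{l}(u|\{j_1,\ldots,j_h\};t_1,\ldots,t_h)\,du\}$, exactly as you propose.
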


As Proposition \ref{prop-ind} shows, the factors in \eqref{factors} can be replaced, at any step, by corresponding factors related with independent variables whose distribution are affected by the past observations.

For time-homogeneous load--sharing models, the factors in \eqref{factors} reduce to those in \eqref{factori}. The concept of time-homogeneous load--sharing models can be  extended in a natural way to the non--homogeneous case. For such a case, the specific form of the above result has been  given in \cite{RycSpiz}.


Before concluding this section, we also recall attention on a further aspect of m.c.h.r. functions.  For $m <n $,  m.c.h.r. functions are generally different from the corresponding  m.c.h.r. functions
associated to the marginal distribution of the vector $(X_1 , \ldots , X_m)$.

\section{ Controversial aspects of stochastic precedence  } \label{sec3}

Let $Y_1$ and $Y_2$ be two random variables.
We remind from the Introduction  that
 $Y_1$ stochastically
precedes $Y_2$ if $\mathbb{P}(Y_1 \leq Y_2) \geq \mathbb{P}(Y_2 \leq Y_1)$. Under the  no-tie condition,
this definition is equivalent to   $\mathbb{P}(Y_1 \leq Y_2) \geq \frac{1}{2}$.  This will be written $Y_1
\preceq_{sp} Y_2$.
 The previous formula \eqref{Rome}  in particular provides us with a simple
characterization of stochastic precedence when $Y_1 $ and $Y_2$ are two non-negative random variables. In fact, letting
\begin{equation*}
\lambda_{i}(t|\emptyset):=\lim_{\Delta t\to 0^+}\frac{1}{\Delta t}
\mathbb{P}(Y_{i}\leq t+\Delta t|     Y_{1:n}   >t)
\end{equation*}
for $i = 1,2$,
 we can write
\begin{equation}  \label{perduevariabili}
 Y_1 \preceq_{sp} Y_2      \Leftrightarrow
  \int_{0}^{+\infty}        \lambda_1 (s | \emptyset )
e^{-  H_{(1)} (s)      } ds  \geq \frac{1}{2}
\end{equation}
where $H_{(1)} (t)    =  \int_0^{t}     [    \lambda_1 (s | \emptyset ) +  \lambda_2 (s | \emptyset )      ]      \,   ds  $.

\begin{example}   \label{romolo}    (The case of independence).
Let $X_{1},X_{2}$ be two independent, non-negative, random variables with
absolutely continuous distributions characterized by the hazard rate functions
$r_{1}(t),r_{2}(t)$, respectively. Then, in view of the
characterization in \eqref{perduevariabili},  one has

\[
X_{1}   \preceq_{sp}
X_{2}\Leftrightarrow      \int_{0}^{\infty}r
_{1}(t)e^{-\int_{0}^{t}\left[  r_{1}(s)+r_{2}(s)\right]  ds}dt \geq  \frac{1}{2} .
\]

When $X_{1},X_{2}$ are independent and exponential with parameters
$r_{1},r_{2}$, the condition $X_{1}\preceq_{sp}X_{2}$ simply
becomes $r_{1}\geq r_{2}$  and so we get the hazard rate ordering.
\end{example}

\bigskip

\begin{example} \label{secondoE}      (The case of conditional independence and exponentiality).
Similarly to the previous Example \ref{remerei}, consider now the case when $\Theta$ is a non-negative random variable with distribution $\Pi_{\Theta
}$
and $X_{1},  X_{2}$ are conditionally independent given $\Theta$, with
$$
\mathbb{P}(X_{1}>t|\Theta=\theta)=\exp\{-c_{1}\theta    t     \},
$$
$$
\mathbb{P}%
(X_{2}>t|\Theta=\theta)=\exp\{-c_{2}\theta     t \},
$$
where $c_{1},c_{2}$ are two fixed positive numbers. In this case, one has
\[
\lambda_{i}(t|\emptyset)=c_{i}\mathbb{E}(\Theta|X_{ 1:n  }>t).
\]
Thus, we are in the case of Example \ref{remo4} and  the condition $X_{1}\preceq_{sp}X_{2}$
becomes $c_1 \geq c_2$.
\end{example}

\begin{rem} \label{rema}
It is immediate to see that, in the case of stochastic independence, the
condition $X_{1}\preceq_{sp}X_{2}$ is implied by the condition that
$X_{1}$ precedes $X_{2}$ in the usual stochastic ordering (written
$X_{1}\preceq_{st}X_{2}$), namely%
\[
\mathbb{P}(X_{1}>t)\leq\mathbb{P}(X_{2}>t),\forall t>0.
\]

This implication is not valid anymore, when the condition of independence is
dropped; see e.g. the discussion and counter-examples in \cite{DSS1, DSS4}. The characterization in \eqref{perduevariabili} can help us to easily
understand the logic on which  counter-examples may be built up. 
\end{rem}

\begin{rem} \label{remava}
The relation of stochastic precedence does not generally satisfy the
transitivity property. In fact, it is possible to show examples where, for
three  real-valued random variables $X_{1},X_{2},X_{3}$, the following
conditions simultaneously hold:%
\begin{equation} \label{transitivity}
\mathbb{P}(X_{1}<X_{2}) >\frac{1}{2}, \,\,\mathbb{P}(X_{2}<X_{3}) >\frac{1}{2}, \,\,\mathbb{P}(X_{3}<X_{1}) >\frac{1}{2}
\end{equation}

Possibly under different languages, this topic has been often considered in the literature and famous examples have been given (see e.g. \cite{Blyth1972, Fishburn, G, Savage, steinhausparadox, TRYBULA}). In this respect, we point out that, for the case of non-negative variables, examples in discrete-time can be easily converted into examples in continuous-time.

 We notice furthermore that the possibility of \eqref{transitivity} is obviously excluded when $X_{1}%
,X_{2},X_{3}$ are independent variables, satisfying the property%
\[
X_{1}\preceq_{st}X_{2}\preceq_{st}X_{3}.
\]
\end{rem}

We now introduce the following notation to point out a further aspect, of  stochastic precedence, which may
appear controversial at first glance.

The probability  $\mathbb{P}\left(  X_{1:n}=X_{j}\right) $ will be denoted by $\alpha_{j}$.
More generally, for $A\subseteq\left[  n\right]  $ with $|A|>1$, $j\in A$, we set
\begin{equation} \label{nuovoalfa}
\alpha_{j}^{[A]}:=\mathbb{P}\left(  X_{1:A}=X_{j}\right) .
\end{equation}
For $A\subseteq\left[  n\right]  $ with $|A|>1$, $i,j\in A$, set
\begin{equation} \label{spinA}
X_{i}\preceq_{sp}^{\left[  A\right]  }X_{j}%
\end{equation}
if
\[
\alpha_{i}^{[A]}\geq \alpha_{j}^{[A]}.
\]

 In words the relation \eqref{spinA}  says that the random variable $X_i $ has, with respect to $X_j$, a greater (or equal) probability
to be the minimum among all the variables $X_k$ with $k \in A \subset [n]$. Such a relation can be of interest in several contexts  where it
is important to detect which variable has a greater probability to be a minimum among a given set of random variables.
For instance, consider the collection, denoted by $[n]$, of all the horses registred in a racecourse. Let $A\subset [n]$ be
the set of those  horses scheduled to take part in a specific race. The bookmakers are called to compare the probability of victory only among the horses belonging to $A$. Thus they are interested in the probabilities  $\alpha_i^{[A]} $'s, for $i \in A$,  and in the relations \eqref{spinA}.
It is natural to think of  others  contexts, in particular Economics and Reliability theory, in which
definition \eqref{spinA} could be relevant, see e.g. Example \ref{esempioSpi}  below.

\medskip

Consider now two non-disjoint subsets $A,B\subseteq [n]$ and two elements
$i,j\in A\cap B$.

We notice that the inequalities
\[
X_{i}\preceq_{sp}^{\left[  A\right]  }X_{j},X_{j}\preceq_{sp}^{\left[  B\right]
}X_{i}%
\]
can simultaneously hold (see also, e.g., \cite{Saari}). In particular, it can happen that, for an element
$l\notin A$, we can have%
\[
\alpha_{i}^{[A]}>\alpha_{j}^{[A]},\alpha_{i}^{[A\cup l]}<\alpha_{j}^{[A\cup
l]]}.
\]
and, for three different elements $i,j,l\in\left[  n\right]  $,
\[
X_{i}\preceq_{sp}X_{j},X_{j}\preceq_{sp}^{\left[ \{ i,j,l \}\right]  }X_{i}.
\]
Example \ref{no2} in the next section shows a case where the latter   situation arises.
We will refer to this type of circumstances as to an \textit{aggregation/marginalization
paradox}.

\begin{example} \label{esempioSpi}
Let $T$ denote the lifetime of a \textit{coherent} system made with $n$ binary
components, whose lifetimes are denoted by $X_{1},...,X_{n}$. As it is very
well known, $T$ can  be written in the form%

\begin{equation} \label{cutsets}
T=\max_{k=1, \ldots , K}\min_{h\in P_{k}}X_{h}
\end{equation}
where $P_{1},...,P_{K}$ are the\textit{ minimal path sets }of the system (see
e.g. \cite{BarPro1975}). Any $P_{k}$ is a series system and Equation \eqref{cutsets} says
that the system can be written as a parallel of series system. Using the
notation defined in \eqref{nuovoalfa}, the
quantity $\alpha_{h}^{[P_{k}]}$ can be seen as the Barlow-Proschan importance
index of the component $h$ w.r.t. the $k$-th minimal path set. See also Remark
2 in Section 2. For two components $i$ and $j$ both belonging to two different
minimal path sets $P_{k^{\prime}}$ and $P_{k^{\prime\prime}}$, it may in
general happen that $\alpha_{i}^{[P_{k^{\prime}}]}<\alpha_{j}^{[P_{k^{\prime}%
}]}$ and $\alpha_{i}^{[P_{k^{\prime\prime}}]}>\alpha_{j}^{[P_{k^{\prime\prime
}}]}$. Such a situation may appear rather controversial, taking into account
that both $P_{k^{\prime}}$ and $P_{k^{\prime\prime}}$ are series systems.
\end{example}


\section{A simplifying scenario} \label{sec4}

Let the random vector $\mathbf{X} =(X_1, \ldots , X_n)$  be given.
An issue of interest in our study is the identification
of the variables that are small according to the following definition

\begin{definition}
\label{small} 
\begin{itemize}
\item[(i)] We say that $X_i$ is \emph{weekly small} with respect to (w.r.t.) $\mathbf{X} =  (X_1,X_2,\ldots,X_n)  $ if $\alpha_i
\geq \alpha_j$ for $j = 1 , \ldots , n $.

\item[(ii)] We say that $X_i$ is \emph{small} w.r.t. $\mathbf{X}$ if $X_i$ is
weekly small w.r.t. $\mathbf{X}$ and there exists $j$ such that $%
\displaystyle {\alpha_i> \alpha_j}$.
\end{itemize}
\end{definition}
Notice that a weakly small element always exists whereas the existence of a small element is not guaranteed. However,
 no small element exists if and only if $\alpha_1 = \cdots = \alpha_n = 1/n$.

 Actually, the quantities $(\alpha_i: i =1, \ldots ,n)$  can be  computed using \eqref{Rome}. However, for some models, the determination of small variables may be rather complicate.

In this section we analyze situations where the scenario is simplified.
First of all, we  notice that, in the case when we deal with only two random variables, the property of being weakly small is actually equivalent to stochastic precedence.

Let us now consider the case when $n>2 $. As the following example shows, the circumstance that $X_1$ stochastically precedes all the variables $
X_2,....,X_n$ does not imply (and is not implied by) the condition that $X_1$ is small  w.r.t. $\mathbf{X}$.
\begin{example}\label{no2}
Let  $X_1,X_2,X_3$ be three independent random variables where, for
    $\varepsilon \in  (0,\frac{1}{2}) $,
$X_1 $ is the degenerate random variable $\frac{1}{2} - \varepsilon $ and where $X_2, X_3 \sim  U(0,1)$.
 For $\varepsilon$  small enough, the r.v. $X_2$ and $X_3$ are small w.r.t. $(X_1,X_2,X_3)$.
Indeed
$$
    \mathbb{P}    (X_1 < \min \{X_2,X_3\}) =\left  (\frac{1}{2} + \varepsilon \right )^2 \cong \frac{1}{4}
$$
and
$$\mathbb{P} (X_2 < \min \{X_1,X_3\}) = \mP(X_3 < \min \{X_1,X_2\}) =\frac{1}{2}\left [1-   \mathbb{P}    (X_1 < \min \{X_2,X_3\}) \right ]=
$$
$$
=\frac{1}{2}\left [1- \left (\frac{1}{2}+\varepsilon \right )^2 \right ]\cong \frac{3}{8}  .
$$

On the other hand, we obviously have
  $$
  \mathbb{P}(X_1 < X_2) = \mathbb{P}(X_1 < X_3) =  \frac{1}{2}+ \varepsilon  .
  $$

\end{example}

Of course, checking the stochastic precedence of a random variable $X_1$  with respect to a set of other variables is typically much easier than checking the property of it being small.
In this respect, the following two  definitions are of interest in our analysis.

Given  $\mathbf{X}=\left (X_1,X_2,\ldots,X_n
\right )$, for any $i = 1, \ldots , n $ we denote by $      V_{[i]}  $ the set of indexes defined as
\begin{equation*}
V_{[i]} = \left \{j \in [n]: {\mathbb{P}} (X_i < X_j) \geq \frac{1}{2}
\right \},
\end{equation*}
and set
$$
     \mathbf{X}_{A}    =(X_i: i \in A).
$$
\begin{definition}
\label{couple} We say that $X_i$ is \emph{pair-determined} in $\mathbf{X}$ if for
any subset $A\subset V_{[i]} $ the random variable $X_i $ is weakly small
w.r.t. $\mathbf{X}_{A \cup \{i\}}           $. 
\end{definition}

The applied meaning of the above definition can be
appreciated by thinking of a betting situation, where $X_{1},...,X_{n}$ are
hitting times until the first occurrence of competing events (such as in
horse-racing) and where different players are expected to bet on them. A player,
betting on $X_{i}$, wins when $X_{i}=X_{1:n}$, namely it is convenient to
bet on $X_{i\text{ }}$ when $X_{i\text{ }}$ is small w.r.t. $\mathbf{X}%
$.$_{\text{ }}$In such a context, the pair-determined property guarantees that
the choice of betting on $X_{i}$ is justified all the times that only elements
$X_{j\text{ }}$with $j\in V_{[i]}$ take part in the competition.

A simple case when  all the variables are pair-determined is given in the next
Example \ref{exem}. A case where not all the  variables are pair-determined can, on the contrary,
be found in Example~\ref{no2}.

\begin{example} \label{exem}
Consider a triple $X_{1},X_{2},X_{3}$ such that
\[
X_{1}\preceq_{sp}X_{2},X_{2}\preceq_{sp}X_{3},X_{3}\preceq_{sp}X_{1}%
\]
and the inequalities are understood in a  ``strict" sense.  Thus
we have that each single variable is trivially pair-determined since we have%
\[
V_{\left[  1\right]  }=\{2\},V_{\left[  2\right]  }=\{3\},V_{\left[  3\right]
}=\{1\}.
\]
\end{example}

Reminding the definition, given above,  of the symbol $ \preceq_{sp}^{[A]}$, we now present the following

\begin{definition}
\label{diFabioBuda} The vector $\mathbf{X} =(X_1, \ldots , X_n )$ is \emph{ordered by
pairs} when $X_i \preceq_{sp} X_j $ implies that
$$
X_i \preceq_{sp}^{[A]} X_j
$$
for any $A \subset [n]$ and $i,j \in A$.
\end{definition}

The ordered by pairs property is indeed rather strong and has a number of implications as shown next.

\begin{prop} \label{consegue}
If  $\mathbf{X} =(X_1, \ldots , X_n )$ is ordered by
pairs then the following properties hold:
\begin{itemize}
\item[a)] For any $i \in [n]$, $X_i$ has the pair-determined property;
\item[b)] If $X_i \preceq_{sp} X_j $ and  $X_j \preceq_{sp} X_k $ then  $X_i \preceq_{sp}^{[A]} X_k $, for any $A \subset [n]$ such that $i,k \in A$;
\item[c)] $X_1 \preceq_{sp} X_j $, for $j= 2, \ldots , n $ if and only if $X_1$ is weakly small w.r.t. $\mathbf{X}_{A } $ for any $A \subset [n]$ such that $1 \in A$.
\end{itemize}
\end{prop}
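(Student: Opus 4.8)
The plan is to derive all three items from the defining property of "ordered by pairs": $X_i \preceq_{sp} X_j$ forces $X_i \preceq_{sp}^{[A]} X_j$ whenever $i,j \in A$. I would first record the trivial observation, already noted in the text, that under the no-tie property $X_i \preceq_{sp} X_j$ is equivalent to $\mathbb{P}(X_i < X_j) \ge \tfrac12$, i.e.\ $j \in V_{[i]}$, and that for two variables "weakly small" coincides with stochastic precedence. This dictionary is what lets me translate between the $V_{[i]}$ language of Definition~\ref{couple}, the $\alpha^{[A]}_j$ language of \eqref{spinA}, and the hypothesis of Definition~\ref{diFabioBuda}.

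For item a), fix $i$ and take any $A \subseteq V_{[i]}$; I must show $X_i$ is weakly small w.r.t.\ $\mathbf{X}_{A \cup \{i\}}$, i.e.\ $\alpha_i^{[A\cup\{i\}]} \ge \alpha_k^{[A\cup\{i\}]}$ for every $k \in A$. Since $k \in A \subseteq V_{[i]}$ we have $X_i \preceq_{sp} X_k$, and applying the ordered-by-pairs hypothesis with the set $A \cup \{i\}$ gives $X_i \preceq_{sp}^{[A\cup\{i\}]} X_k$, which is exactly $\alpha_i^{[A\cup\{i\}]} \ge \alpha_k^{[A\cup\{i\}]}$. This holds for all $k \in A$, so $X_i$ is weakly small w.r.t.\ $\mathbf{X}_{A\cup\{i\}}$, proving the pair-determined property. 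For item b), suppose $X_i \preceq_{sp} X_j$ and $X_j \preceq_{sp} X_k$, and let $A$ be any subset containing $i$ and $k$. I would like to invoke the hypothesis directly on $X_i \preceq_{sp} X_k$, so the real task is to first establish $X_i \preceq_{sp} X_k$ (transitivity of $\preceq_{sp}$ itself). This is where I expect the main obstacle: $\preceq_{sp}$ is not transitive in general, so transitivity here must be squeezed out of the ordered-by-pairs hypothesis. The idea is to apply the hypothesis to the three-element set $A_0 = \{i,j,k\}$: from $X_i \preceq_{sp} X_j$ get $\alpha_i^{[A_0]} \ge \alpha_j^{[A_0]}$, and from $X_j \preceq_{sp} X_k$ get $\alpha_j^{[A_0]} \ge \alpha_k^{[A_0]}$, hence $\alpha_i^{[A_0]} \ge \alpha_k^{[A_0]}$; one then has to argue that this, combined with the pairwise comparability structure, yields $\mathbb{P}(X_i < X_k) \ge \tfrac12$ (for instance by noting that on $\{i,k\}$ being weakly small is stochastic precedence, and using the ordered-by-pairs property once more to descend from $\{i,j,k\}$ to $\{i,k\}$ — here one must check the logical direction is available). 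Once $X_i \preceq_{sp} X_k$ is secured, a final application of the hypothesis on the set $A$ gives $X_i \preceq_{sp}^{[A]} X_k$.

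For item c), the forward direction: assume $X_1 \preceq_{sp} X_j$ for all $j \ge 2$, and let $A$ be any subset with $1 \in A$. For each $k \in A$, $k \ne 1$, we have $X_1 \preceq_{sp} X_k$, so by the ordered-by-pairs hypothesis applied to $A$, $\alpha_1^{[A]} \ge \alpha_k^{[A]}$; since this holds for all such $k$, $X_1$ is weakly small w.r.t.\ $\mathbf{X}_A$. The converse is the easy pigeonhole-style direction: if $X_1$ is weakly small w.r.t.\ $\mathbf{X}_A$ for every $A \ni 1$, then in particular taking $A = \{1,j\}$ gives $\alpha_1^{[\{1,j\}]} \ge \alpha_j^{[\{1,j\}]}$, i.e.\ $\mathbb{P}(X_1 < X_j) \ge \mathbb{P}(X_j < X_1)$, which is $X_1 \preceq_{sp} X_j$. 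I expect items a) and c) to be essentially immediate once the $V_{[i]}$/$\alpha^{[A]}$/$\preceq_{sp}$ dictionary is in place; the genuine content, and the step requiring care, is the transitivity argument inside item b), since it is precisely the failure of unconditional transitivity of $\preceq_{sp}$ (Remark~\ref{remava}) that the ordered-by-pairs hypothesis is being used to repair.
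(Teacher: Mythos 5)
Your items a) and c) are correct. Your route for a) is in fact cleaner than the paper's: the paper first re-indexes so that $\alpha_1\ge\cdots\ge\alpha_n$, deduces $V_{[i]}=\{i+1,\dots,n\}$, and only then invokes the hypothesis, whereas you go directly from $k\in V_{[i]}$ (i.e.\ $X_i\preceq_{sp}X_k$) to the ordered-by-pairs hypothesis applied on $A\cup\{i\}$, with no detour through the indexing. For c) the paper says only that the proof ``is similar and can be omitted''; your split into the forward direction (apply ordered-by-pairs to each pair $\{1,k\}$ inside $A$) and the converse (specialize to $A=\{1,j\}$, using that for two variables weakly small and stochastic precedence coincide) supplies exactly the omitted detail.

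For b) you reproduce the paper's strategy --- pass to $A_0=\{i,j,k\}$, obtain $\alpha_i^{[A_0]}\ge\alpha_j^{[A_0]}\ge\alpha_k^{[A_0]}$, then try to descend to $X_i\preceq_{sp}X_k$ --- and your stated misgiving about ``the logical direction'' is the crux, not a side issue. Definition~\ref{diFabioBuda} only provides the implication from the pairwise $\preceq_{sp}$ to $\preceq_{sp}^{[A]}$; the step the paper phrases as ``by applying again the property of ordered by pairs'' tacitly uses the converse. The case analysis you would want runs aground: either $X_i\preceq_{sp}X_k$ already holds (done), or $\mathbb{P}(X_k<X_i)>1/2$, and in the latter case ordered-by-pairs yields $\alpha_k^{[A]}\ge\alpha_i^{[A]}$ for every $A\ni i,k$, which together with $\alpha_i^{[A_0]}\ge\alpha_k^{[A_0]}$ forces equality on $A_0$ but says nothing for $A$ not containing $j$; in particular for $A=\{i,k\}$ the desired conclusion $X_i\preceq_{sp}^{[\{i,k\}]}X_k$ is exactly $X_i\preceq_{sp}X_k$, contradicting the assumed strict reverse ordering. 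A cyclic triple with $\mathbb{P}(X_1<X_2)=\mathbb{P}(X_2<X_3)=\mathbb{P}(X_3<X_1)>1/2$ and, by symmetry, $\alpha_1^{[\{1,2,3\}]}=\alpha_2^{[\{1,2,3\}]}=\alpha_3^{[\{1,2,3\}]}=1/3$ satisfies the ordered-by-pairs definition as literally written yet violates the conclusion of b) for $A=\{1,3\}$. So the gap you flag is genuine; the paper's own proof glosses over it, and closing it requires either reading ordered-by-pairs as a two-sided condition or imposing strict inequalities that rule out such symmetric cycles.
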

\begin{proof}
 a) To fix ideas we assume that the variables are indexed in such a way that
$$
\alpha_1 \geq \alpha_2 \geq \cdots \geq \alpha_n  .
$$
Namely,  for $i < j $, one has $X_i \preceq_{sp}^{[n]} X_j$.  By taking into account the ordered by pairs property
for $\mathbf{X} $, one has $X_i \preceq_{sp} X_j$.
Thus,
$$
V_{[i]} =\{  i+1, \ldots , n         \}
$$
for $i = 1, \ldots , n-1 $ and $V_{[n]} = \emptyset$.
Fix now $A$ such that $i \in A$ and $A \subset V_{[i]} \cup \{i \}$. By applying again the ordered by pairs property we obtain
$$
\alpha_i^{[A]}    \geq         \alpha_k^{[A]} ,
$$
for any $k \in A$. Thus $X_i $ is weakly small in $ X_{A \cup \{ i\}}$. Whence we can conclude that $\mathbf{X}$ has the pair--determined property.

b) By hypothesis, $X_i \preceq_{sp} X_j $ and $X_j \preceq_{sp} X_k$. Then the property of ordered by pairs yields
$$
X_i \preceq^{[B]}_{sp} X_j, X_j \preceq^{[B]}_{sp} X_k
$$
where $B = \{i,j,k\}$. Namely, $\alpha_i^{[B]} \geq \alpha_j^{[B]} \geq    \alpha_k^{[B]}        $. Then, by applying again the property of ordered by pairs we obtain
that for any $A $ with $i,k \in A$ one has $ X_i \preceq^{[A]}_{sp} X_k$.

The proof of c) is similar to the above and it can be omitted.
\end{proof}

As an immediate consequence of Proposition \ref{consegue}, one obtains that both non--transitivity and aggregation/marginalization paradoxes can be excluded under the ordered by pairs property. Thus Example \ref{exem} shows a case where the latter property fails
even if all the variables are pair determined.

\medskip

\begin{rem}
Concerning  Barlow-Proschan importance indexes of components in a coherent system, we can
cite a simple application of Proposition \ref{consegue}. Let the vector $\left(
X_{1},...,X_{n}\right)  $ of components' lifetimes have the ordered by pairs
property and compare two components $i,j$. Then the condition
$X_{i}\preceq_{sp}X_{j}$ implies that the component $i$ has a greater importance index, within any path set, than the component $j$.  See also Example \ref{esempioSpi}.
\end{rem}

The following results give some sufficient conditions for the pair--determined, ordered by pairs, or weakly small properties.

\begin{lemma}\label{3rv}
Let $Y_{1},Y_{2},Z$ be independent random variables with $Y_{1}%
\preceq_{st}Y_{2}$. Then%
\[
\mathbb{P}\left(  Y_{1}\leq\min\left(  Y_{2},Z\right)  \right)
\geq \mathbb{P}\left(  Y_{2}\leq\min\left(  Y_{1},Z\right)  \right)  .
\]
\end{lemma}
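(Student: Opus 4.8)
The plan is to reduce the statement to the fact that, for independent non-negative random variables, the stochastic order $\preceq_{st}$ is preserved under taking minima with an independent variable, together with a simple comparison of probabilities of events. First I would note that, since $Y_1 \preceq_{st} Y_2$, we have $\bar F_{Y_1}(t) \le \bar F_{Y_2}(t)$ for all $t$, where $\bar F_{Y_i}$ denotes the survival function. Using independence, write
\[
\mathbb{P}\left( Y_1 \le \min(Y_2, Z) \right) = \int_0^\infty \bar F_{Y_2}(t)\, \bar F_Z(t)\, dF_{Y_1}(t),
\]
and similarly with the roles of $Y_1$ and $Y_2$ swapped in the inner survival factor and the integrator. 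The difficulty is that both the integrand and the measure $dF_{Y_i}$ change when we swap, so a direct termwise comparison is not available.

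To get around this, I would instead compute the difference $\mathbb{P}(Y_1 \le \min(Y_2,Z)) - \mathbb{P}(Y_2 \le \min(Y_1,Z))$ by conditioning on $Z$ and on the unordered pair $\{Y_1, Y_2\}$, or more cleanly by a symmetry/coupling argument: on the event $\{Y_1 \le Y_2\}$ we want to compare it against the event $\{Y_2 \le Y_1\}$. Concretely, I expect the cleanest route is to observe that
\[
\mathbb{P}\left( Y_1 \le \min(Y_2, Z) \right) - \mathbb{P}\left( Y_2 \le \min(Y_1, Z) \right)
= \mathbb{P}\left( Y_1 \le Y_2,\ Y_1 \le Z \right) - \mathbb{P}\left( Y_2 \le Y_1,\ Y_2 \le Z \right),
\]
(using the no-tie property, which holds whenever the distributions are absolutely continuous — though if $Z$ or the $Y_i$ are allowed atoms one must be slightly careful with $\le$ versus $<$, and I would handle ties explicitly or assume continuity as elsewhere in the paper). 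Then I would condition on $Z = z$: the conditional difference becomes $\mathbb{P}(Y_1 \le Y_2,\ Y_1 \le z) - \mathbb{P}(Y_2 \le Y_1,\ Y_2 \le z)$, and the claim reduces to showing this is $\ge 0$ for every $z$, since we may then integrate against the law of $Z$.

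For the fixed-$z$ inequality, restrict attention to the (independent) pair $(Y_1, Y_2)$. One shows $\mathbb{P}(Y_1 \le Y_2,\ Y_1 \le z) \ge \mathbb{P}(Y_2 \le Y_1,\ Y_2 \le z)$ by noting that the left side equals $\mathbb{P}(Y_1 \le \min(Y_2, z))$ and the right equals $\mathbb{P}(Y_2 \le \min(Y_1, z))$, i.e. we have reduced the three-variable claim to the same claim with $Z$ replaced by the \emph{constant} $z$; this degenerate case is transparent because $Y_1 \preceq_{st} Y_2$ with $Y_1 \wedge z \preceq_{st} Y_2 \wedge z$ still comparing only two honest random variables against a deterministic threshold. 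Writing it out:
\[
\mathbb{P}(Y_1 \le \min(Y_2,z)) = \int_0^z \bar F_{Y_2}(t)\, dF_{Y_1}(t), \qquad
\mathbb{P}(Y_2 \le \min(Y_1,z)) = \int_0^z \bar F_{Y_1}(t)\, dF_{Y_2}(t).
\]
The main obstacle is precisely comparing these two integrals, which still have different integrators; I expect to resolve it by integration by parts (Fubini), rewriting both as double integrals over $\{(s,t): 0 \le s,t \le z\}$ — specifically $\mathbb{P}(Y_1 \le \min(Y_2,z)) = \mathbb{P}(Y_1 \le Y_2, Y_1 \le z, Y_2 \le z) + \bar F_{Y_2}(z)\mathbb{P}(Y_1 \le z)$ — so that the difference collapses to $\bar F_{Y_2}(z)\mathbb{P}(Y_1 \le z) - \bar F_{Y_1}(z)\mathbb{P}(Y_2 \le z)$, because the parts symmetric in $(Y_1,Y_2)$ over the square $[0,z]^2$ cancel. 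This remaining quantity equals $\bar F_{Y_2}(z) F_{Y_1}(z) - \bar F_{Y_1}(z) F_{Y_2}(z) = \bar F_{Y_2}(z) - \bar F_{Y_1}(z) \ge 0$ by $Y_1 \preceq_{st} Y_2$ (using $\bar F + F = 1$). Integrating this nonnegative quantity against the law of $Z$ gives the result.
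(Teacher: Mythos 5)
Your first move---conditioning on $Z=z$ to reduce everything to the inequality $\mathbb{P}(Y_1\le\min(Y_2,z))\ge\mathbb{P}(Y_2\le\min(Y_1,z))$ for each fixed $z$---is the same as the paper's, and the decomposition
\[
\mathbb{P}\bigl(Y_1\le\min(Y_2,z)\bigr)=\mathbb{P}\bigl(Y_1\le Y_2,\,Y_1\le z,\,Y_2\le z\bigr)+\bar F_{Y_2}(z)\,F_{Y_1}(z)
\]
is also correct. The gap is your claim that the two ``square'' terms cancel. Under independence they equal $\int_0^z F_{Y_1}(u)\,f_{Y_2}(u)\,du$ and $\int_0^z F_{Y_2}(u)\,f_{Y_1}(u)\,du$ respectively, and these differ in general because the product measure $dF_{Y_1}\!\otimes dF_{Y_2}$ on $[0,z]^2$ is not invariant under swapping coordinates unless $Y_1$ and $Y_2$ are identically distributed. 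Worse, their difference is not even sign-definite under $Y_1\preceq_{st}Y_2$: take $Y_2\sim U(0,1)$, $z=1/2$, and $F_{Y_1}=F_{Y_2}+G$ where $G$ vanishes on $[0,0.49]$, rises linearly to $0.1$ on $[0.49,0.5]$, stays flat and then descends to $0$ by $u=1$ (slopes chosen so $F_{Y_1}$ stays nondecreasing, hence a valid CDF, and $G\ge 0$ gives $Y_1\preceq_{st}Y_2$). A direct computation gives $\int_0^{1/2}F_{Y_1}f_{Y_2}-\int_0^{1/2}F_{Y_2}f_{Y_1}\approx 0.1255-0.1745<0$; the total difference stays nonnegative only because the boundary term $\bar F_{Y_2}(z)-\bar F_{Y_1}(z)=0.1$ outweighs it. So the ``collapse'' step, and hence the remainder of your computation, does not stand.

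The paper proves the fixed-$\xi$ inequality without trying to cancel, by comparing the two full integrals through an intermediate quantity. With $\rho_\xi(u)=F_{Y_1}(\min(u,\xi))$ and $\sigma_\xi(u)=F_{Y_2}(\min(u,\xi))$, one has $\mathbb{P}(Y_1\le\min(Y_2,\xi))=\int_0^\infty f_{Y_2}(u)\rho_\xi(u)\,du$ and $\mathbb{P}(Y_2\le\min(Y_1,\xi))=\int_0^\infty f_{Y_1}(u)\sigma_\xi(u)\,du$, and the chain is
\[
\int_0^\infty f_{Y_2}(u)\rho_\xi(u)\,du\;\ge\;\int_0^\infty f_{Y_2}(u)\sigma_\xi(u)\,du\;\ge\;\int_0^\infty f_{Y_1}(u)\sigma_\xi(u)\,du,
\]
the first inequality using $\rho_\xi\ge\sigma_\xi$ pointwise (i.e.\ $F_{Y_1}\ge F_{Y_2}$) and the second using that $\sigma_\xi$ is nondecreasing and bounded, so $\mathbb{E}[\sigma_\xi(Y_2)]\ge\mathbb{E}[\sigma_\xi(Y_1)]$ when $Y_1\preceq_{st}Y_2$. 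The hypothesis is thus invoked twice---once to change the inner kernel, once to change the integrating density---which is precisely what your subtraction-based decomposition does not capture.
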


\begin{proof}

Denote by $f_{Y_{i}}$ the marginal density function of $Y_{i}$, for $i=1,2$ and by $f_Z$ the one of $Z$.

\begin{equation*}
\mathbb{P}\left(  Y_{1}\leq\min\left(  Y_{2},Z\right)  \right)  =\int%
_{0}^{+\infty}\mathbb{P}\left(  Y_{1}\leq\min\left(  Y_{2},Z\right)
|Z=\xi\right)  f_{Z}\left(  \xi\right)  d\xi=
\end{equation*}
\begin{equation}\label{form1}
=\int_{0}^{+\infty}\mathbb{P}\left(  Y_{1}\leq\min\left(  Y_{2},\xi\right)
\right)  f_{Z}\left(  \xi\right)  d\xi,
\end{equation}
where the second identity follows by the assumption of stochastic independence.

Similarly

\begin{equation}\label{form2}
\mathbb{P}\left(  Y_{2}\leq\min\left(  Y_{1},Z\right)  \right)  =\int%
_{0}^{+\infty}\mathbb{P}\left(  Y_{2}\leq\min\left(  Y_{1},\xi\right)
\right)  f_{Z}\left(  \xi\right)  d\xi.
\end{equation}

Now, for any $\xi>0$, we can write%
\begin{equation}\label{form3}
\mathbb{P}\left(  Y_{1}\leq\min\left(  Y_{2},\xi\right)  \right)  =\int%
_{0}^{+\infty}f_{Y_{2}}\left(  y\right)  \left[  \int_{0}^{\min\left(
y,\xi\right)  }f_{Y_{1}}\left(  x\right)  dx\right]  dy,
\end{equation}
and%

\begin{equation}\label{form4}
\mathbb{P}\left( Y_{2}\leq\min\left(  Y_{1},\xi\right)  \right)  =\int%
_{0}^{+\infty}f_{Y_{1}}\left(  x\right)  \left[  \int_{0}^{\min\left(
x,\xi\right)  }f_{Y_{2}}\left(  y\right)  dy\right]  dx.
\end{equation}
For any $\xi>0$, the functions
\[
\rho_{\xi}(u):=\int_{0}^{\min\left(  u,\xi\right)  }f_{Y_{1}}\left(  x\right)
dx,\,\,\, \sigma_{\xi}(u):=\int_{0}^{\min\left(  u,\xi\right)  }f_{Y_{2}}\left(
x\right)  dx
\]
are non-decreasing function w.r.t. $u>0$, and
\[
\rho_{\xi}(u)\geq\sigma_{\xi}(u)
\]
in view of the assumption $Y_{1}\preceq_{st}Y_{2}$. The same assumption then
puts us in a position to conclude%
\[
\mathbb{P}\left(  Y_{1}\leq\min\left(  Y_{2},\xi\right)  \right)  =\int%
_{0}^{+\infty}f_{Y_{2}}\left(  u\right)    \rho_{\xi}(u)  du\geq
\]

\[
\geq\int_{0}^{+\infty}f_{Y_{2}}\left(  u\right)    \sigma_{\xi
}(u)  du\geq\int_{0}^{+\infty}f_{Y_{1}}\left(  u\right)
\sigma_{\xi}(u)  du=\mathbb{P}\left(  Y_{1}\leq\min\left(  Y_{2}%
,\xi\right)  \right)  .
\]
Whence the thesis is obtained by recalling equations \eqref{form1}, \eqref{form2} and by integrating the functions in \eqref{form3} and \eqref{form4}  with respect to the variable
$\xi$.
\end{proof}

We can now obtain
 a simple sufficient
condition, in the case of independent random variables, ensuring that a single
random variable $X_{1}$ is simultaneously pair-determined and weakly small in
$\mathbf{X}$.

\begin{prop}
\label{fo} Let $\mathbf{X} =(X_1, \ldots , X_n )$ be a vector of independent random variables such that $X_1 \preceq_{st.} X_i$, for $i =2,
\ldots, n$. Then $X_1$ is pair-determined in $\mathbf{X}$ and it is weakly
small w.r.t. $\mathbf{X} $. Moreover, if the random variables are
not identically distributed then $X_1$ is small w.r.t. $\mathbf{X} $.
\end{prop}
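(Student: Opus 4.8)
The plan is to reduce the two claims---that $X_1$ is pair-determined in $\mathbf{X}$ and that $X_1$ is weakly small w.r.t. $\mathbf{X}$---to repeated applications of Lemma~\ref{3rv}, using the fact that for independent variables the ordinary stochastic order $\preceq_{st}$ is stable under minima. First I would record the elementary observation that if $Y_1 \preceq_{st} Y_2$ and $W$ is independent of both, then $\min(Y_1,W) \preceq_{st} \min(Y_2,W)$; this follows at once by comparing survival functions, since $\mathbb{P}(\min(Y_i,W)>t)=\mathbb{P}(Y_i>t)\mathbb{P}(W>t)$. Iterating, for any subset $A=\{i_1,\dots,i_m\}\subseteq\{2,\dots,n\}$ the hypothesis $X_1\preceq_{st}X_{i_r}$ for every $r$ gives, upon grouping, that $X_1\preceq_{st}X_1$ trivially and more usefully that one may replace the bundle $\min_{i\in A}X_i$ by anything dominated; what I actually need is simply that $X_1 \preceq_{st} \min_{i\in A}X_i$ whenever $A\subseteq\{2,\dots,n\}$ is nonempty, which is immediate from $\mathbb{P}(\min_{i\in A}X_i>t)=\prod_{i\in A}\mathbb{P}(X_i>t)\le \mathbb{P}(X_1>t)$... wait, that inequality goes the wrong way, so instead I use the one-step comparison of Lemma~\ref{3rv} directly with a cleverly chosen $Z$.

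For the weakly small claim, fix $j\in\{2,\dots,n\}$; I must show $\alpha_1\ge\alpha_j$, i.e. $\mathbb{P}(X_1=X_{1:n})\ge\mathbb{P}(X_j=X_{1:n})$. Set $Y_1=X_1$, $Y_2=X_j$, and $Z=\min_{k\in[n]\setminus\{1,j\}}X_k$; by independence $Y_1,Y_2,Z$ are independent, and $Y_1\preceq_{st}Y_2$ by hypothesis. Then $\{X_1=X_{1:n}\}=\{Y_1\le\min(Y_2,Z)\}$ and $\{X_j=X_{1:n}\}=\{Y_2\le\min(Y_1,Z)\}$ up to the null event of a tie (using the no-tie property \eqref{notie}), so Lemma~\ref{3rv} gives $\mathbb{P}(X_1=X_{1:n})\ge\mathbb{P}(X_j=X_{1:n})=\alpha_j$. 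Hence $X_1$ is weakly small w.r.t. $\mathbf{X}$. For the pair-determined claim, fix any $A\subseteq V_{[1]}$ and let $j\in A$; I run exactly the same argument inside $\mathbf{X}_{A\cup\{1\}}$, taking $Z=\min_{k\in A\setminus\{j\}}X_k$ (with $Z\equiv+\infty$ if $A=\{j\}$), to conclude $\mathbb{P}(X_1=\min_{k\in A\cup\{1\}}X_k)\ge\mathbb{P}(X_j=\min_{k\in A\cup\{1\}}X_k)$; since $j\in A$ was arbitrary, $X_1$ is weakly small w.r.t. $\mathbf{X}_{A\cup\{1\}}$, which is precisely the pair-determined property.

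Finally, for the last assertion suppose the $X_i$ are not identically distributed. Since $X_1\preceq_{st}X_i$ for all $i$, non-identical distribution forces $X_1\preceq_{st}X_j$ to be strict for some $j$ in the sense that $\mathbb{P}(X_1>t)<\mathbb{P}(X_j>t)$ on a set of positive Lebesgue measure---indeed if $X_1$ and every $X_i$ had the same law we would contradict the hypothesis. For such a $j$ I revisit the chain of inequalities in the proof of Lemma~\ref{3rv}: the step $\rho_\xi(u)\ge\sigma_\xi(u)$ becomes strict on a set of positive measure for $\xi$ large, and the monotonicity/independence argument then yields $\alpha_1>\alpha_j$, so $X_1$ is small w.r.t. $\mathbf{X}$. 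The main obstacle I anticipate is precisely this last point: making the strict inequality $\alpha_1>\alpha_j$ rigorous requires tracking where in Lemma~\ref{3rv} strictness is preserved, i.e. showing that the density $f_Z$ of the auxiliary minimum $Z=\min_{k\ne 1,j}X_k$ and the densities $f_{Y_1},f_{Y_2}$ put positive mass on the region where $\rho_\xi>\sigma_\xi$; I would handle this by noting that all the distributions are absolutely continuous with densities positive on overlapping intervals, so no degenerate cancellation can occur.
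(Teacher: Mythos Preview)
Your proof is correct and follows essentially the same approach as the paper: you apply Lemma~\ref{3rv} with $Y_1=X_1$, $Y_2=X_j$, and $Z$ equal to the minimum of the remaining variables in the relevant index set, which is exactly what the paper does (it writes the argument once for a general $A\ni 1,j$, thereby covering both the weakly-small and pair-determined assertions simultaneously). The paper's own proof does not spell out the strict-inequality step for the ``small'' claim either, so your discussion of that point already goes beyond what is written there.
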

\begin{proof}
Fix $j$ and  a set $A \subset [n] $ such that $1,j \in A$. We recall the notation   $X_{1: A \setminus \{1, j\}}= \min_{\ell \in A , \ell \neq 1,j  }  X_\ell$.
From the assumption that $X_1 \preceq_{st} X_j $ and from the above Lemma \ref{3rv}, we immediately get
$$
\alpha_1^{[A]} = \mP ( X_1 \leq X_j \wedge  X_{1: A \setminus \{1, j\}} ) \geq \mP ( X_j \leq X_1 \wedge   X_{1: A \setminus \{1, j\}}) \geq \alpha_j^{[A]} .
$$
The proof can be concluded by recalling Definition \ref{couple}.
\end{proof}

\begin{prop}
\label{fo2} Let $\mathbf{X} =(X_1, \ldots , X_n )$ be a vector of independent random variables such that $X_{i-1} \preceq_{st.} X_{i }$, for $i =2,
\ldots, n$. Then $\mathbf{X} $ is ordered by pairs.
\end{prop}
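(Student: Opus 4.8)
The plan is to combine transitivity of the usual stochastic order with Lemma~\ref{3rv}, isolating at the end the ``degenerate'' situation in which a stochastic-precedence inequality is in fact an equality.

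\emph{Reduction.} Since $\preceq_{st}$ is transitive, the hypothesis $X_{i-1}\preceq_{st}X_i$ ($i=2,\dots,n$) gives $X_a\preceq_{st}X_b$ whenever $a\le b$, i.e.\ $\overline{F}_a\le\overline{F}_b$. Arguing exactly as in the proof of Proposition~\ref{fo}, I would first record that for every $a<b$ and every $A\subset[n]$ with $a,b\in A$ one has $\alpha_a^{[A]}\ge\alpha_b^{[A]}$: apply Lemma~\ref{3rv} with $Y_1=X_a$, $Y_2=X_b$ and $Z=X_{1:A\setminus\{a,b\}}=\min_{k\in A,\,k\ne a,b}X_k$, which is a function of the remaining (independent) coordinates and hence independent of $(X_a,X_b)$; the no-tie property lets one identify $\mP(X_a\le X_b\wedge Z)$ with $\alpha_a^{[A]}$ and $\mP(X_b\le X_a\wedge Z)$ with $\alpha_b^{[A]}$. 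When $|A|=2$ there is no $Z$ and the inequality $\alpha_a^{[A]}\ge\alpha_b^{[A]}$ reduces to $\mP(X_a\le X_b)\ge\tfrac12$, which follows directly from $\overline{F}_b\ge\overline{F}_a$.

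\emph{Main argument.} Suppose now $X_i\preceq_{sp}X_j$ and fix $A$ with $i,j\in A$; I must show $\alpha_i^{[A]}\ge\alpha_j^{[A]}$. If $i<j$ this is the inequality just recorded. The only remaining case is $i>j$, and then $j<i$ forces $X_j\preceq_{st}X_i$, hence $X_j\preceq_{sp}X_i$, i.e.\ $\mP(X_j\le X_i)\ge\tfrac12$; combined with the hypothesis $\mP(X_i\le X_j)\ge\tfrac12$ and $\mP(X_i\le X_j)+\mP(X_j\le X_i)=1$ (no ties), both probabilities equal $\tfrac12$. In this case I would prove that $X_i$ and $X_j$ are identically distributed, which forces $\alpha_i^{[A]}=\alpha_j^{[A]}$ (swapping the $i$-th and $j$-th coordinates leaves the law of $\mathbf{X}_{A}$ unchanged), hence $X_i\preceq_{sp}^{[A]}X_j$. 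Together with the previous paragraph this gives ``$X_i\preceq_{sp}X_j\Rightarrow X_i\preceq_{sp}^{[A]}X_j$'' for all $i,j,A$, i.e.\ $\mathbf{X}$ is ordered by pairs.

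\emph{The main obstacle.} The delicate point is the degenerate case: ruling out that $\mP(X_i<X_j)=\tfrac12$ coexists with $\alpha_i^{[A]}\ne\alpha_j^{[A]}$. To establish $X_i\stackrel{d}{=}X_j$ from $\mP(X_i<X_j)=\tfrac12$ and $X_j\preceq_{st}X_i$, use independence to write $\tfrac12=\mP(X_j<X_i)=\int_0^{\infty}\overline{F}_i(t)f_j(t)\,dt$, whereas $\int_0^{\infty}\overline{F}_j(t)f_j(t)\,dt=\big[-\tfrac12\overline{F}_j(t)^2\big]_0^{\infty}=\tfrac12$; subtracting and using $\overline{F}_i\ge\overline{F}_j$ gives $(\overline{F}_i-\overline{F}_j)\,f_j=0$ a.e., so $\overline{F}_i=\overline{F}_j$ on $\{f_j>0\}$. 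It then remains to fill in the intervals where $f_j=0$: on each maximal such open interval the continuous function $\overline{F}_j$ is constant, say $\equiv c$, and $\overline{F}_i$ equals $c$ at both endpoints of the interval (at a left endpoint $0$ use $\overline{F}_j(0)=1$ and $\overline{F}_i\le 1$; at an infinite right endpoint use $\overline{F}_j(\infty)=0$ and $\overline{F}_i\ge 0$), so the monotone function $\overline{F}_i$ is constant $\equiv c$ there as well; hence $\overline{F}_i\equiv\overline{F}_j$. I expect this ``filling in'' step to be the only nontrivial point; everything else is a direct application of Lemma~\ref{3rv} and transitivity of $\preceq_{st}$.
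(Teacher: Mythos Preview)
Your core argument---transitivity of $\preceq_{st}$ followed by Lemma~\ref{3rv} with $Z=X_{1:A\setminus\{i,j\}}$---is exactly the paper's approach; the paper's proof is essentially your ``Reduction'' paragraph. Where you go further is the degenerate case $i>j$: the paper simply fixes $i<j$, establishes $\alpha_i^{[A]}\ge\alpha_j^{[A]}$, and stops, never discussing the possibility that $X_i\preceq_{sp}X_j$ holds with $i>j$. You correctly observe that, combined with $X_j\preceq_{st}X_i$, this forces $\mP(X_i<X_j)=\mP(X_j<X_i)=\tfrac12$, then deduce $X_i\stackrel{d}{=}X_j$ and hence $\alpha_i^{[A]}=\alpha_j^{[A]}$ by exchangeability of the two independent coordinates. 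So your proof is in fact more complete than the paper's on this point.

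One technical comment on the ``filling in'' step: the set $\{f_j>0\}$ is only defined modulo Lebesgue-null sets, so speaking of the maximal open intervals of its complement is slightly informal. A cleaner phrasing is to work with the open set $U=\{t:\overline F_i(t)>\overline F_j(t)\}$ directly. From $\int_0^\infty(\overline F_i-\overline F_j)\,dF_j=0$ and $\overline F_i\ge\overline F_j$ you get $F_j(U)=0$, so $\overline F_j$ is constant on each connected component $(a,b)$ of $U$. If $a>0$ then $a\notin U$ gives $\overline F_i(a)=\overline F_j(a)$, and monotonicity of $\overline F_i$ then contradicts $\overline F_i(t)>\overline F_j(t)=\overline F_j(a)$ for $t\in(a,b)$; if $a=0$ then $\overline F_j\equiv 1$ on $[0,b)$, forcing $\overline F_i>1$ there, again impossible. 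Hence $U=\emptyset$ and $\overline F_i\equiv\overline F_j$. This is the same idea you sketch, just arranged to sidestep the measurability quibble.
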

\begin{proof}
Fix $i <j$ and  a set $A \subset [n] $ such that $i,j \in A$.
From the assumption that $X_i \preceq_{st} X_j $ and from the above Lemma \ref{3rv}, we immediately get
$$
\alpha_i^{[A]} = \mP ( X_i \leq X_j \wedge  X_{1: A \setminus \{i, j\}} ) \geq \mP ( X_j \leq X_i \wedge  X_{1: A \setminus \{i, j\}} ) \geq \alpha_j^{[A]} .
$$
The proof can be concluded by recalling Definition \ref{diFabioBuda}.
\end{proof}

We now pass to consider the case of non-independent random variables and focus attention on the family of
the m.c.h.r.'s. First of all we have
the following simple conclusion.
\begin{prop}\label{Budapest}
If $\lambda_1 (t | \emptyset ) \geq \lambda_j (t | \emptyset )$, for any $t \geq 0 $ and $j \geq 2$, then
$X_1$ is weakly small w.r.t. $    \mathbf{X} =    (X_1, \ldots , X_n)$.
\end{prop}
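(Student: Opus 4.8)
The plan is to reduce the claim to the integral formula \eqref{Rome} and then compare the integrands pointwise. Recall that by \eqref{Rome} we have, for every $i\in[n]$,
\[
\alpha_i = {\mathbb{P}}(X_i = X_{1:n}) = \int_0^{\infty} \lambda_i(s|\emptyset)\,\exp\Bigl\{-\textstyle\sum_{k=1}^n \lambda_k(s|\emptyset)\Bigr\}\,ds .
\]
Since the exponential weight $e^{-H_{(1)}(s)} = \exp\{-\sum_{k=1}^n\lambda_k(s|\emptyset)\}$ does not depend on the index, the difference $\alpha_1 - \alpha_j$ equals $\int_0^\infty [\lambda_1(s|\emptyset) - \lambda_j(s|\emptyset)]\,e^{-H_{(1)}(s)}\,ds$. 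Under the hypothesis $\lambda_1(s|\emptyset)\ge \lambda_j(s|\emptyset)$ for all $s\ge 0$ and all $j\ge 2$, the integrand is non-negative, hence $\alpha_1 \ge \alpha_j$ for every $j\ge 2$. By Definition \ref{small}(i), this says precisely that $X_1$ is weakly small w.r.t. $\mathbf{X}$.

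Equivalently, one can invoke Proposition \ref{prop-ind} or directly item b) of Proposition \ref{Ladispoli}: taking $B=[0,\infty)$ in \eqref{second} gives $\alpha_i = \int_{[0,\infty)} \lambda_i(s|\emptyset)\,e^{-H_{(1)}(s)}\,ds$, and the same pointwise comparison of the m.c.h.r. functions yields the inequality between the integrals. In fact the statement is an immediate corollary of the implication b)\,$\Rightarrow$\,a) in Theorem \ref{th:1}: the hypothesis is exactly condition b) there (indeed it implies $\Lambda(\{t:\mu_j(t)>\mu_1(t)\})=0$ for each $j\ge 2$), so condition a) holds with $B=[0,\infty)$, giving $\alpha_j = {\mathbb{P}}(X_j=X_{1:n})\le {\mathbb{P}}(X_1=X_{1:n}) = \alpha_1$.

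There is essentially no obstacle here: the whole content is packaged into \eqref{Rome}/\eqref{second}, and the only thing to check is that the common exponential factor allows a termwise comparison of the two integrals, which is immediate from non-negativity of $e^{-H_{(1)}(s)}$ and of $\lambda_1(s|\emptyset)-\lambda_j(s|\emptyset)$. I would write the proof in two or three lines, citing either Theorem \ref{th:1} or formula \eqref{Rome}, and remark that strict inequality of the m.c.h.r.'s on a set of positive Lebesgue measure for some $j$ would upgrade "weakly small" to "small".
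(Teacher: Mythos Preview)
Your proof is correct and follows exactly the paper's approach: the paper's proof consists of the single sentence ``It is an immediate consequence of formula \eqref{Rome}.'' Your elaboration via pointwise comparison of the integrands is precisely the content of that remark, and your additional observations (that one could alternatively invoke Theorem~\ref{th:1} or Proposition~\ref{Ladispoli}, and that strict inequality on a set of positive measure upgrades to ``small'') are correct and compatible with the paper.
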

\begin{proof}
It is an immediate consequence of formula \eqref{Rome}.
\end{proof}
In our analysis a simplifying condition is the one of ``initial-time-homogeneity".
We say that $\mathbf{X} =(X_1, \ldots , X_n)$ is \emph{initially time homogeneous} if there exist  constants $(\beta_j : j =1, \ldots , n )$ such that $ \lambda_j (t | \emptyset )  = \beta_j  $.

\begin{rem}\label{concludiamo?}
Let us focus attention on the independent random variables $Z_1, \ldots , Z_n $ introduced in Proposition \ref{prop-ind} and considered in equation \eqref{Exp(c)Gen}  and \eqref{Rome}.  If 
$\mathbf{X} =(X_1, \ldots , X_n )$ is  initially time-homogeneous then $Z_1, \ldots , Z_n $ are exponentially distributed.  
\end{rem} 

In view of the above remark,  
an obvious corollary of Proposition \ref{Budapest} is the following one
\begin{corollary} \label{coro1}
If   $\mathbf{X} =(X_1, \ldots , X_n)$ is initially time homogeneous,  then
$X_j $ is weakly small w.r.t.  $  \mathbf{X}     $ if and only if $\beta_j = \max_{i =1, \ldots , n } \beta_i$;
\end{corollary}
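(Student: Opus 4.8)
The plan is to deduce this corollary directly from Proposition \ref{Budapest} together with the observation recorded in Remark \ref{concludiamo?}. Under the initial-time-homogeneity hypothesis we have $\lambda_j(t\mid\emptyset)=\beta_j$ for all $t\ge 0$, so the m.c.h.r. functions at the empty history are constant. The key point is that the probabilities $\alpha_j=\mathbb{P}(X_{1:n}=X_j)$ depend on the joint law of $\mathbf{X}$ only through the functions $\lambda_1(\cdot\mid\emptyset),\dots,\lambda_n(\cdot\mid\emptyset)$, by formula \eqref{Rome}; hence in the initially time-homogeneous case they coincide with the corresponding probabilities for independent \emph{exponential} variables $Z_1,\dots,Z_n$ with rates $\beta_1,\dots,\beta_n$, for which Lemma \ref{LemmaISect3} gives the explicit value.

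The steps I would carry out, in order, are the following. First, invoke \eqref{Rome} to write $\alpha_j=\int_0^\infty \beta_j\exp\{-t\sum_{i=1}^n\beta_i\}\,dt$, which by \eqref{LemmaExp(b)} (or by direct integration) equals $\beta_j/\sum_{i=1}^n\beta_i$. Second, note that a variable $X_j$ is weakly small w.r.t. $\mathbf{X}$ precisely when $\alpha_j\ge\alpha_i$ for all $i$, which — since $\sum_{i=1}^n\beta_i>0$ is a common positive denominator — is equivalent to $\beta_j\ge\beta_i$ for all $i$, i.e. to $\beta_j=\max_{i=1,\dots,n}\beta_i$. This gives both implications of the ``if and only if'' at once. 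Alternatively, one can phrase the argument purely through Proposition \ref{Budapest}: the hypothesis $\lambda_1(t\mid\emptyset)\ge\lambda_j(t\mid\emptyset)$ there becomes $\beta_1\ge\beta_j$, which holds for all $j$ exactly when $\beta_1=\max_i\beta_i$, yielding the ``if'' direction; the ``only if'' direction then follows from the explicit formula $\alpha_j=\beta_j/\sum_i\beta_i$, since if some $\beta_k>\beta_j$ then $\alpha_k>\alpha_j$ and $X_j$ cannot be weakly small.

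There is essentially no obstacle here: the corollary is a one-line consequence of the closed form for $\alpha_j$ in the constant-rate case, and all the analytic work (the derivation of \eqref{Rome} and the reduction to independent variables in Proposition \ref{prop-ind}) has already been done. The only mild subtlety worth a sentence is that $\sum_{i=1}^n\beta_i$ must be strictly positive for the division and the integral to make sense; but this is automatic, since if all $\beta_i=0$ then $\mathbb{P}(X_{1:n}>t)=1$ for every $t$ by \eqref{Exp(c)Gen}, contradicting the fact that $X_{1:n}$ is a finite-valued random variable with an absolutely continuous distribution.

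\begin{proof}
Since $\mathbf{X}$ is initially time homogeneous, $\lambda_i(t\mid\emptyset)=\beta_i$ for every $t\ge 0$ and every $i\in[n]$, and (arguing as in Remark \ref{concludiamo?}) one has $\sum_{i=1}^n\beta_i>0$. By \eqref{Rome},
\[
\alpha_j=\int_0^{+\infty}\beta_j\exp\Bigl\{-t\sum_{i=1}^n\beta_i\Bigr\}\,dt=\frac{\beta_j}{\sum_{i=1}^n\beta_i},\qquad j\in[n].
\]
Because the denominator is a common positive constant, for every $i,j\in[n]$ we have $\alpha_j\ge\alpha_i$ if and only if $\beta_j\ge\beta_i$. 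Hence $X_j$ is weakly small w.r.t. $\mathbf{X}$, i.e. $\alpha_j\ge\alpha_i$ for all $i$, if and only if $\beta_j\ge\beta_i$ for all $i$, that is, if and only if $\beta_j=\max_{i=1,\dots,n}\beta_i$.
\end{proof}
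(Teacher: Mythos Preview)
Your proof is correct and follows exactly the route the paper indicates: use Remark \ref{concludiamo?} (equivalently formula \eqref{Rome}) to reduce to independent exponentials, obtain $\alpha_j=\beta_j/\sum_i\beta_i$, and read off both directions of the equivalence. The paper does not spell out a proof beyond calling the corollary ``obvious'' from Proposition \ref{Budapest} and Remark \ref{concludiamo?}, so your write-up is in fact more explicit than the original while being identical in substance.
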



\medskip

The following simple result shows a sufficient condition for the property of being ordered by pairs.

\begin{prop} \label{sufficient} Assume that $\mathbf{X} $ is initially-time-homogeneous,
with $ \beta_i \neq  \beta_j$ for all $ i \neq j $, 
 and that  the condition $ \beta_\ell >\beta_j   $  implies
$ \lambda_\ell (t|I; t_1, \ldots , t_{|I|})\geq   \lambda_j(t|I; t_1, \ldots , t_{|I|})    $, for any $I \subset [n]$,  $t_1, \ldots , t_{|I|}$ and $t > t_{|I|}$. Then the condition of ordered by pairs holds true.
\end{prop}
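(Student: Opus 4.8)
The plan is to reduce the statement to an application of Proposition \ref{prop-ind} and Lemma \ref{3rv}, exploiting the fact that initial time-homogeneity forces the auxiliary independent variables $Z_1,\ldots,Z_n$ of Proposition \ref{prop-ind} to be \emph{exponential}, and that the hypothesis on the higher-order m.c.h.r.\ functions propagates the ordering to \emph{all} the conditional sub-models. Concretely, suppose $X_i \preceq_{sp} X_j$; since $\mathbf X$ is initially time-homogeneous with distinct $\beta$'s, the characterization \eqref{perduevariabili} (equivalently \eqref{Rome} applied to the pair) gives $\beta_i \geq \beta_j$, and by the no-tie assumption and distinctness in fact $\beta_i > \beta_j$. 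We must show $X_i \preceq_{sp}^{[A]} X_j$, i.e. $\alpha_i^{[A]} \geq \alpha_j^{[A]}$, for every $A \subset [n]$ containing $i,j$.

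The key step is to compute $\alpha_i^{[A]}$ and $\alpha_j^{[A]}$ by conditioning on which of the variables $X_\ell$, $\ell \in A$, fail first and tracking the dynamic history. First I would treat the base case $A = [n]$: here $\alpha_i = \int_0^\infty \beta_i \exp\{-t\sum_{k} \beta_k\}\,dt = \beta_i/\sum_k\beta_k \geq \beta_j/\sum_k\beta_k = \alpha_j$ directly from \eqref{Rome}, so the claim holds. For a general proper subset $A$, the natural approach is a downward induction on $|A|$ (or, equivalently, to ``integrate out'' the variables in $[n]\setminus A$ one at a time using the dynamic-history machinery of Remark \ref{R3} and formula \eqref{lamba}): conditioning on the observation of the first failure among the complementary variables, or more precisely expressing $\alpha_i^{[A]}$ via Proposition \ref{jointdensityordstataug}, we see that $\alpha_i^{[A]}$ is an integral over possible sequences of events of products of the factors \eqref{factors}, and the hypothesis ``$\beta_\ell > \beta_j \Rightarrow \lambda_\ell(t\mid I;\cdots)\geq \lambda_j(t\mid I;\cdots)$'' guarantees that at \emph{every} node of this branching the intensity associated with $i$ dominates that associated with $j$. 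Then a coupling/symmetrization argument — swapping the roles of $i$ and $j$ in each history where $j$ is hit before $i$, exactly as in the proof of Lemma \ref{3rv} where $\rho_\xi \geq \sigma_\xi$ — shows the integral for $i$ is at least the integral for $j$.

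A cleaner route, which I would prefer to carry out in detail, is to invoke Proposition \ref{prop-ind} twice. Fix $A$ with $i,j\in A$. By Remark \ref{R3}/formula \eqref{lamba}, conditioning on the history in which the variables outside $A$ are observed produces, for the residual lifetimes $(X_\ell - t)_{\ell\in A}$, a model whose m.c.h.r.\ functions are still governed by the $\lambda$'s of $\mathbf X$; integrating out $[n]\setminus A$ does not in general preserve initial time-homogeneity, but the \emph{ordering} $\lambda_i \geq \lambda_j$ (at every history, by hypothesis, since $\beta_i > \beta_j$) is preserved. Hence the marginal vector $\mathbf X_A$ has m.c.h.r.\ functions satisfying $\widehat\lambda_i(t\mid\emptyset) \geq \widehat\lambda_j(t\mid\emptyset)$ for all $t$. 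By Proposition \ref{Budapest} applied to the relabelled vector $\mathbf X_A$ (with $i$ in the role of ``$X_1$'' and comparing against $j$), one gets $\alpha_i^{[A]} \geq \alpha_j^{[A]}$, i.e.\ $X_i \preceq_{sp}^{[A]} X_j$.

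The main obstacle is the claim that integrating out the variables in $[n]\setminus A$ preserves the pointwise inequality of the first-order m.c.h.r.\ functions of $i$ and $j$. This is precisely where the full strength of the hypothesis — the ordering of \emph{all} higher-order m.c.h.r.\ functions $\lambda_\ell(t\mid I;t_1,\ldots,t_{|I|})$, not merely the $\beta$'s — is needed: the marginal rate $\widehat\lambda_i(t\mid\emptyset)$ for $\mathbf X_A$ is a mixture (over the unobserved configuration of the variables outside $A$ that are still alive or already dead) of conditional rates $\lambda_i(t\mid I;t_1,\ldots)$, and one must check that term by term the mixture for $i$ dominates that for $j$, with matching mixing weights. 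The matching of weights follows because the distribution of the ``hidden'' part of the history depends on $i,j$ only through rates that are themselves ordered the same way, so a monotonicity/coupling argument closes the gap; I would isolate this as the one technical lemma and prove it by the same non-decreasing-integrand comparison used in Lemma \ref{3rv}.
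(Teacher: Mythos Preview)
Your ``cleaner route'' is essentially the paper's proof. The paper writes the marginal m.c.h.r.\ function exactly as the mixture you anticipate,
\[
\lambda_i^{[A]}(t\mid\emptyset)=\sum_{I\subset A^c}\int_{[0,t]^{|I|}}\lambda_i(t\mid I;t_1,\ldots,t_{|I|})\,f_{\mathbf X_I}(t_1,\ldots,t_{|I|}\mid X_{1:A}>t)\,dt_1\cdots dt_{|I|},
\]
observes that the hypothesis gives $\lambda_i(t\mid I;\cdots)\geq\lambda_j(t\mid I;\cdots)$ term by term whenever $\beta_i>\beta_j$, concludes $\lambda_i^{[A]}(t\mid\emptyset)\geq\lambda_j^{[A]}(t\mid\emptyset)$ for all $t$, and then finishes by \eqref{Rome} (which is precisely the content of Proposition~\ref{Budapest} you invoke).

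Your one unnecessary detour is the final paragraph. The mixing weights do \emph{not} depend on $i$ or $j$ at all: the conditional density $f_{\mathbf X_I}(\,\cdot\mid X_{1:A}>t)$ involves only the variables indexed by $I\subset A^c$ and the event $\{X_{1:A}>t\}$, neither of which singles out any particular index inside $A$. Hence the same weight multiplies $\lambda_i(t\mid I;\cdots)$ and $\lambda_j(t\mid I;\cdots)$, and no monotonicity or coupling argument is needed for the ``matching weights'' step---it is immediate. The first approach you sketch (symmetrization/swapping and downward induction on $|A|$ via Lemma~\ref{3rv}) is more laborious than required and is not used in the paper.
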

\begin{proof} Without any loss of generality we can consider that
\begin{equation}\label{beta1}
\beta_1 >\beta_2 >  \cdots >\beta_n .
\end{equation}
 In order to obtain the thesis
we must prove that, for any $ j < \ell$ and any $A \subset [n]$ such that $  \ell ,j \in A $, $X_j \preceq^{[A]} X_\ell $. We then consider the ``marginal" m.c.h.r function
$$
\lambda_i^{[A]}(t| \emptyset) := \lim_{\Delta t \to 0^+} \frac{\mP(X_i \in (t, t +\Delta t)  | X_{1:A} >t     )}{\Delta t} .
$$

By taking into account the definition of the $\lambda_i (t| I, t_1, \ldots , t_{|I|})$ we obtain
\begin{equation}\label{intA}
\lambda_i^{[A]}(t| \emptyset )= \sum_{I \subset A^c} \int_{[0,t]^{|I|}} \lambda_i(t; I, t_1, \ldots, t_{|I|}) f_{\mathbf{X}_I} (t_1, \ldots, t_{|I|}| X_{1:A} >t)dt_1 \ldots dt_{|I|} .
\end{equation}
By \eqref{intA} and  \eqref{beta1} we obtain that
$$
\lambda_j^{[A]}(t| \emptyset) \geq  \lambda_\ell^{[A]}(t| \emptyset)
$$
where $j, \ell \in A$ with $j < \ell$. Then by formula \eqref{Rome} follows the thesis.
\end{proof}

As a special case of initially-time-homogeneous models, we find the time-homogeneous load-sharing models mentioned in Section 2.
 Even if such a condition is very restrictive, this class of models is relevant in that it can still be seen as a generalization of the condition of independence and exponentiality. It can be interesting to specialize to these cases the preceding results about initially-time-homogeneous models.

\medskip \medskip

\textbf{Acknowledgements.}
We would  like to thank an anonymous Referee for  valuable comments and suggestions which, in
  particular, led us to add Remarks 2, 3, and 6.
Most of the results had been presented
 at the IWAP conference held in Budapest (Hungary), June
2018. E.D.S. and F.S. acknowledge partial support of Ateneo Sapienza Research Projects
\textquotedblleft\textit{Dipendenza,}
\textit{disuguaglianze e approssimazioni in modelli stocastici}" (2015),
``\textit{Processi stocastici: Teoria e applicazioni}" (2016), and
\textquotedblleft\textit{Simmetrie e Disuguaglianze in Modelli Stocastici}"
(2018). Y.M. would like to express his gratitude to coathors for their invitation and support during his visit at Department of Mathematics, Sapienza University of Rome,  in January 2018.

\bibliographystyle{abbrv}

\end{document}